\documentclass[11pt,reqno]{amsart}
\usepackage{amsthm,amsfonts,amssymb,euscript}

\setlength{\textwidth}{16.0cm} \setlength{\oddsidemargin}{0.5cm}
\setlength{\evensidemargin}{0.5cm}

\def\ddb#1{\sqrt{-1}\partial\bar{\partial}#1}
\def\dt#1{\frac{\partial}{\partial t}#1}

\newcommand{\D}{{\mathbb D}^{n}}

\newcommand{\C}{{\mathbb C}^{n}}

\newcommand{\dr}{\omega}
\newcommand{\ka}{K\"{a}hler}

\theoremstyle{plain}
  \newtheorem{theorem}{Theorem}[section]
  \newtheorem{proposition}[theorem]{Proposition}
  \newtheorem{lemma}[theorem]{Lemma}

\theoremstyle{definition}
  \newtheorem{definition}[theorem]{Definition}

\numberwithin{equation}{section}
\begin{document}

\title[Unnormalized conical K\"{a}hler-Ricci flow]{Maximal time existence of unnormalized conical K\"{a}hler-Ricci flow}
\author{Liangming Shen}
\address{Department of Mathematics, Princeton University, Princeton, NJ 08544, USA.}
\email{liangmin@math.princeton.edu}

\begin{abstract}
We generalize the maximal time existence of \ka-Ricci flow in Tian-Zhang \cite{TZ} and Song-Tian \cite{ST3} to conical case.
Furthermore, if the twisted canonical bundle $K_{M}+(1-\beta)[D]$ is big or big and nef, we can expect more on the limit behaviors
of such conical \ka-Ricci flow. Moreover, the results still hold for simple normal crossing divisor.
\end{abstract}

\maketitle


\section{introduction}\label{section1}

    \ka-Ricci flow has become a powerful tool in the study of \ka\ geometry for many years. In \cite{TZ}, Tian-Zhang adapted Tsuji's idea
\cite{Ts} to consider \ka-Ricci flow in the level of cohomology class on projective manifolds. They proved that \ka-Ricci flow can exist
in maximal time interval, where the cohomology class of the \ka\ form $\dr(t)$ stays nondegenerate. Base on this maximal time existence,
they obtained existence results and limit behavior analysis in case that the canonical line bundle is big or big and nef. Later, in the
series of papers \cite{ST1} \cite{ST2} \cite{ST3}, Song-Tian extended this idea to construct an analytic method of minimal model program
in algebraic geometry. \\

    Recently, conic \ka\ metric has become an attracting topic for geometers. More specifically, conic \ka-Einstein metric plays a
critial role in recent great progress of \ka-Einstein problem, see \cite{Ti12} \cite{CDS1} \cite{CDS2} \cite{CDS3}. Besides this seminal
work, there are a lot of papers considering the existence of conical \ka-Einstein metric, for instance, see \cite{Ber} \cite{Br} \cite{CGP}
\cite{GP} \cite{JMR} \cite{LS}. On the other hand, conical \ka-Ricci flow has been studied in \cite{MRS} \cite{CW1} \cite{CW2} \cite{LZ}
\cite{W}. As the study of \ka-Ricci flow, conical \ka-Ricci flow first was considered in hope of evolving conic \ka\ metrics to conical
\ka-Einstein metrics. First, we recall that a \ka\ current $\dr$ is a conical \ka\ metric with angle $2\pi\beta(0<\beta<1)$ along the
divisor D, if $\dr$ is smooth away from D and asymptotically equivalent along D to the model conic metric $$\sqrt{-1}\left(\frac{dz_{1}
\wedge d\bar{z}_{1}}{|z_{1}|^{2(1-\beta)}}+\sum_{i=2}^{n}dz_{i}\wedge d\bar{z}_{i}\right),$$ where $(z_{1},z_{2},\cdots,z_{n})$ are local
holomorphic coordinates and $D=\{z_{1}=0\}$ locally. We call $\dr$ a conic \ka-Einstein metric with angle $2\pi\beta(0<\beta<1)$ along
the divisor D if it is a conic \ka\ metric and satisfies the equation $$Ric(\dr)=\mu\dr+2\pi(1-\beta)[D]$$ in the sense of currents, and
a smooth \ka-Einstein metric outside the divisor D. In \cite{CW1} \cite{CW2} \cite{LZ}, they set $\mu=\beta$ and studied such normalized 
conical \ka-Ricci flow $$\dt\dr=-Ric(\dr)+\beta\dr+2\pi(1-\beta)[D]$$ starting with a conical \ka\ metric with angle $2\pi\beta$ along the 
anticanonical divisor D. In \cite{LZ}, Liu-Zhang used smooth approximation of conic metric, which was set up by Campana-Guenancia-P\^{a}un
in \cite{CGP}, \cite{GP} to obtain a long time solution for the normalized normalized conical \ka-Ricci flow. Moreover, they proved that 
when $\beta\in(0,\frac{1}{2}]$, the flow converges to a conical \ka-Einstein metric if it exists. \\

    Now we can consider whether the conical \ka-Ricci flow exists on general \ka\
manifolds stating with arbitrary conic \ka\ metric, and how long it will last, moreover, what will happen when the flow extincts. Actually,
we can extend the work of Tian-Zhang \cite{TZ} and Song-Tian \cite{ST3} to conic case. And in the study of these problems, we can generalize
the irreducible divisor D to be a simple normal crossing divisor, i.e, $D=\sum\limits_{i=1}^{l}(1-\beta_{i})D_{i}(l\leq n),$ where each
$D_{i}$ is irreducible and $\beta_{i}\in(0,1).$ Locally near a point $p\in\bigcap_{i}D_{i},$ $D_{i}=\{z_{i}=0\}.$ Then a conic metric $\dr$ 
along each $D_{i}$ with cone angle $2\pi\beta_{i}$ is asymptotically equivalent to the model metric $$\dr_{\beta}=\sqrt{-1}\left(\sum_{i=1}
^{l}\frac{dz_{i}\wedge d\bar{z}_{i}}{|z_{i}|^{2(1-\beta_{i})}}+\sum_{i=l+1}^{n}dz_{i}\wedge d\bar{z}_{i}\right).$$
And the unnormalized conical \ka-Ricci flow is
\begin{equation}\label{eq:ckrf-SNC}
 \left\{ \begin{array}{rcl}
&\dt\dr=-Ric(\dr)+2\pi\sum\limits_{i=1}^{l}(1-\beta_{i})[D_{i}]\\
&\dr(0)=\dr^{*}=\dr_{0}+\sum\limits_{i=1}^{l}k\ddb||S_{i}||_{i}^{2\beta_{i}}
        \end{array}\right.
\end{equation}
  
where the initial metric is conic \ka\ metric $\dr^{*}$ with conic angles $2\pi\beta{i}$ along $D_{i}$. Suppose $S_{i}$ is a holomorphic
section of the bundle associated to the divisor $D_{i}$ then this conic \ka\ metric can be written as $\dr^{*}=\dr_{0}+\sum\limits_{i=1}^{l}
k\ddb||S_{i}||_{i}^{2\beta_{i}}$, where $\dr_{0}$ is a smooth \ka\ metric on M, k is a small positive constant and $||\cdot||_{i}^{2}$ is a 
Hermitian metric associated to $[D_{i}]$. Similar to Tian-Zhang and Song-Tian, we can consider this flow equation \eqref{eq:ckrf-SNC} in 
cohomology lever and obtain the following maximal time existence theorem:
\begin{theorem}\label{mainthm1}
 Let $$T_{0}:=\sup\{t|[\dr_{0}]-t(c_{1}(M)-\sum_{i=1}^{l}(1-\beta_{i})[D_{i}])>0\},$$ then starting with a conic \ka\ metric $\dr^{*}$ 
defined above, the unnormalized conical \ka-Ricci flow \eqref{eq:ckrf-SNC} has a unique solution on $[0,T_{0}),$ which is smooth outside the
divisors $D_{i}$ and $C^{2,\alpha}$ along the divisors $D_{i}$.
\end{theorem}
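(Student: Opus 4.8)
The plan is to follow the Tian-Zhang and Song-Tian reduction of the flow to a scalar parabolic complex Monge-Amp\`ere equation, but to carry this out at the level of a Campana-Guenancia-P\^aun smoothing of the conic data and then pass to the limit using estimates that are uniform up to the divisors. Along \eqref{eq:ckrf-SNC} the cohomology class evolves as $[\dr(t)]=[\dr_{0}]-t(c_{1}(M)-\sum_{i}(1-\beta_{i})[D_{i}])$, so $T_{0}$ is precisely the time at which this class ceases to be positive; this both identifies the maximal time and shows that no solution can persist past $T_{0}$. Fix a smooth volume form $\Omega$ with $Ric(\Omega)\in c_{1}(M)$, smooth closed $(1,1)$-forms $\theta_{i}\in[D_{i}]$ (absorbing factors of $2\pi$), together with the sections $S_{i}$ and metrics $\norm{\cdot}_{i}$. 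Setting $\hat{\dr}_{t}=\dr_{0}-t\,Ric(\Omega)+t\sum_{i}(1-\beta_{i})\theta_{i}$, which represents $[\dr(t)]$ and whose class is \ka\ for $t<T_{0}$, and writing $\dr=\hat{\dr}_{t}+\ddb{\varphi}$, the flow is equivalent off $\bigcup_{i}D_{i}$ to $\dt\varphi=\log\frac{(\hat{\dr}_{t}+\ddb{\varphi})^{n}}{\Omega_{\beta}}$, where $\Omega_{\beta}=\Omega/\prod_{i}\norm{S_{i}}_{i}^{2(1-\beta_{i})}$ is the conic reference volume form and $\varphi(0)$ is chosen so that $\hat{\dr}_{0}+\ddb{\varphi(0)}=\dr^{*}$.

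Since $\Omega_{\beta}$ and $\dr^{*}$ are singular along the $D_{i}$, I would not attack this equation directly but regularize following Campana-Guenancia-P\^aun: replace each $\norm{S_{i}}_{i}^{2\beta_{i}}$ and the conic volume form by the smooth families built from $(\norm{S_{i}}_{i}^{2}+\epsilon^{2})^{\beta_{i}}$, yielding smooth initial data $\dr^{*}_{\epsilon}$ and a smooth volume form $\Omega_{\epsilon}$ with $\Omega_{\epsilon}\ra\Omega_{\beta}$. For each fixed $\epsilon>0$ the corresponding equation is a genuine smooth parabolic complex Monge-Amp\`ere equation, and by the Tian-Zhang maximal time existence theorem it admits a smooth solution $\varphi_{\epsilon}$ on $[0,T_{0})$; the twisting term only alters the smooth reference forms, so the maximal time is still governed by positivity of $[\dr(t)]$. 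The problem is thereby reduced to proving estimates for $\varphi_{\epsilon}$ that are uniform in $\epsilon$ on every compact subinterval $[0,T]\subset[0,T_{0})$.

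Fix such a $T$. The uniform zeroth-order bounds on $\varphi_{\epsilon}$ and $\dt\varphi_{\epsilon}$ follow from the maximum principle for the parabolic Monge-Amp\`ere equation, using that $[\hat{\dr}_{T}]$ is still \ka\ to build comparison barriers and that $\Omega_{\epsilon}$ is comparable to $\Omega_{\beta}$ with the singular contributions carrying the favorable sign. The decisive step is the second-order estimate: I would bound the conic Laplacian, namely $\mbox{tr}_{\hat{\dr}_{t}}\dr_{\epsilon}$, uniformly up to the divisors by a parabolic Chern-Lu / Aubin-Yau inequality comparing $\dr_{\epsilon}$ with the fixed model conic metric $\dr_{\beta}$. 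With uniform $C^{0}$ and $C^{2}$ bounds in hand, $\dr_{\epsilon}$ is uniformly equivalent to the model conic metric on $[0,T]$; interior parabolic Schauder theory then gives uniform higher-order estimates on compact subsets of $M\setminus\bigcup_{i}D_{i}$, while the conic (Donaldson) H\"older theory upgrades the Laplacian bound to a uniform $C^{2,\alpha}$ estimate up to the $D_{i}$. A subsequential limit as $\epsilon\ra0$ then yields a solution $\dr(t)$ on $[0,T_{0})$ that is smooth off the divisors and $C^{2,\alpha}$ along them. Uniqueness follows by comparing two solutions on $M\setminus\bigcup_{i}D_{i}$ and invoking the $C^{2,\alpha}$ regularity across the $D_{i}$ to apply the maximum principle globally.

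The hard part will be the uniform second-order estimate up to the divisors: the model conic metric $\dr_{\beta}$ has unbounded bisectional curvature near the $D_{i}$, so the standard Chern-Lu computation cannot be applied verbatim. I expect to need the precise curvature properties of the Campana-Guenancia-P\^aun approximation — in particular lower bounds on the bisectional curvature of the $\dr_{\epsilon}$, or a carefully chosen barrier absorbing the singular curvature terms — in order to close the maximum-principle argument with constants independent of $\epsilon$, and then to feed this into the conic Schauder theory to obtain the claimed $C^{2,\alpha}$ regularity along the divisors.
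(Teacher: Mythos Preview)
Your proposal is correct and follows essentially the same strategy as the paper: CGP smoothing of the conic data, uniform $C^{0}$ and Laplacian estimates via the maximum principle (the paper compares with the smooth approximating metric $\omega_{0,\epsilon}$ rather than the singular model $\omega_{\beta}$, and uses the auxiliary barrier $\chi_{\rho}$ from \cite{CGP,GP} to absorb the blowing-up bisectional curvature --- exactly the ``carefully chosen barrier'' you anticipate in your last paragraph), local higher-order estimates off $D$ via \cite{LZ}, and passage to the limit in $C^{\infty}_{loc}$ and in the sense of currents. The only noteworthy difference is in the uniqueness step: the paper does not rely on the $C^{2,\alpha}$ regularity (which it in fact defers to a separate paper \cite{Sh}), but instead first obtains H\"older continuity of $\varphi$ via Kolodziej's $L^{p}$ estimate and then runs a Jeffres-type barrier argument with $a\|S\|^{2p}$ to push the extremum of the difference away from $D$, so that the maximum principle can be applied on the smooth locus without appealing to the harder conic Schauder theory.
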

      we can prove this theorem by Tian-Zhang's method \cite{TZ}. However, as the conic metric is not smooth, we need to apply Liu-Zhang's 
approach, making use of the approximation developed in \cite{GP}, to obtain a family of solutions for the approximating equation,
then we prove this family of solutions converge to the solution of the original flow equation. Finally, we prove the solution is smooth
outside the divisors $D_{i}$ and $C^{2,\alpha}$ along the divisor in suitable sense.\\

      We recall that in \cite{TZ} \cite{ST1} \cite{ST2} \cite{ST3}, \ka-Ricci flow is used to study minimal model program. More precisely,
they analysed the behavior of \ka-Ricci flow near the singular time or infinite time on different types of projective manifolds. As we
mentioned before that conic \ka\ metric has becoming an interesting topic, we hope that unnormalized conical \ka-Ricci flow can play some
similar role in the study of conic \ka\ metrics on projective manifolds. We note that conical \ka-Ricci flow always preserves conical
singularities so it's natrual to expect that we can have a good picture of the long time behavior besides conical singularities. Similarly,
if we assume that so-called twisted canonical line bundle $K_{M}+\sum_{i=1}^{l}(1-\beta)[D_{i}])$ is big, we have such a theorem, as 
\cite{TZ},
\begin{theorem}\label{mainthm2}
   If the twisted canonical line bundle is big on the projective manifold M, starting with a conic \ka\ metric $\dr^{*}$,
as t tends to $T_{0}<\infty$ in \ref{mainthm1}, the solution converges to a \ka\ current which is a smooth \ka\ metric outside the divisor
 D and the stable base locus set of $K_{M}+\sum_{i=1}^{l}(1-\beta)[D_{i}])$, say E, and $C^{2,\alpha}$ with conic angle $2\pi\beta_{i}$ 
along $D_{i}\setminus E$.
Moreover, on smooth part, the flow solution converges to the limiting metric in the local $C^{\infty}$-sense and $C^{2,\alpha}$ along
the divisor $D\setminus E$.
\end{theorem}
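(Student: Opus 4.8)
The plan is to adapt the big-case argument of Tian--Zhang \cite{TZ} to the conical setting, carrying out all estimates on the approximating flows supplied by Theorem~\ref{mainthm1} and then sending the regularization parameter to zero. First I would recast \eqref{eq:ckrf-SNC} as a scalar parabolic complex Monge--Amp\`ere equation. Fixing a smooth closed $(1,1)$-form $\chi\in c_1(M)-\sum_{i=1}^{l}(1-\beta_i)[D_i]$ and setting $\dr_t=\dr_0-t\chi$, the solution is $\dr(t)=\dr_t+\ddb\varphi$ with
\begin{equation*}
\dt\varphi=\log\frac{\prod_{i=1}^{l}\norm{S_i}_i^{2(1-\beta_i)}\,(\dr_t+\ddb\varphi)^n}{\Omega_0},
\end{equation*}
where $\Omega_0$ is a fixed smooth volume form adapted to $\chi$ and the divisors, and the product factor encodes the conical singularity along $D=\sum_i(1-\beta_i)D_i$. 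Replacing $\norm{S_i}_i^{2(1-\beta_i)}$ by the Campana--Guenancia--P\^{a}un regularizations $(\norm{S_i}_i^2+\epsilon^2)^{1-\beta_i}$ \cite{CGP} produces approximating potentials $\varphi_\epsilon$ whose convergence as $\epsilon\to0$ is the content of Theorem~\ref{mainthm1}; the new task is uniformity as $t\to T_0$.

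The hypothesis enters through Kodaira's lemma. Since $L:=K_M+\sum_{i=1}^{l}(1-\beta_i)[D_i]$ is big, it carries a singular Hermitian metric $h_L$ with curvature current $\Theta(h_L)\ge c_0\,\dr_0$ for some $c_0>0$ and analytic singularities exactly along the stable base locus $E$. Because $c_1(M)-\sum_i(1-\beta_i)[D_i]=-c_1(L)$, the limiting class is $[\dr(T_0)]=[\dr_0]+T_0\,c_1(L)$, a sum of a \ka\ class and a big class, hence itself big; thus $\dr_{T_0}:=\dr_0+T_0\,\Theta(h_L)$ is a closed positive current representing it, strictly positive and smooth off $E$. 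Using the local potential $\psi_L$ of $h_L$ as a barrier, the uniform upper bound on $\varphi_\epsilon$ over $[0,T_0)$ follows from the parabolic maximum principle applied to the evolution of $\varphi$, while the lower bound comes from comparing $\varphi$ with $T_0\psi_L$ up to a bounded term, locally bounded below on $M\setminus E$. Differentiating in $t$ and applying the maximum principle then yields a uniform bound on $\dt\varphi=\log\big(\prod_{i=1}^{l}\norm{S_i}_i^{2(1-\beta_i)}\,\dr(t)^n/\Omega_0\big)$, hence two-sided control of the volume form on compact subsets of $M\setminus E$.

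With the $C^0$ and volume estimates in hand, the higher-order bounds split into two regimes. On any compact $K\Subset M\setminus(D\cup E)$ the equation is uniformly parabolic with smooth coefficients, so the local parabolic Evans--Krylov and Schauder theory gives uniform $C^\infty$ bounds, independent of $\epsilon$ and of $t$ near $T_0$; by Arzel\`{a}--Ascoli and uniqueness of the limit these upgrade to local $C^\infty$ convergence of $\dr(t)$ on the smooth part. Along $D_i\setminus E$ I would instead run the conical barrier and conical Schauder arguments already used for Theorem~\ref{mainthm1}, now anchored to the strictly positive reference $\Theta(h_L)$ on compact subsets of $M\setminus E$, to obtain uniform conical $C^{2,\alpha}$ bounds with cone angle $2\pi\beta_i$, which likewise upgrade to $C^{2,\alpha}$ convergence along $D\setminus E$. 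Sending $\epsilon\to0$ and then $t\to T_0$, the potentials converge to a limit $\varphi_{T_0}$, and $\dr(T_0)=\dr_{T_0}+\ddb\varphi_{T_0}$ is the claimed \ka\ current.

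The hard part will be the uniform second-order estimate near the divisors while staying away from $E$. The Aubin--Yau Laplacian computation requires a lower bound on the bisectional curvature of the background, yet the singular metric $h_L$ degenerates as one approaches $E$, while simultaneously the conical structure along $D_i$ forces one to use the singular model $\dr_\beta$ rather than a smooth reference. I would resolve this by exhausting $M\setminus E$ by compact sets on which $\Theta(h_L)\ge c_0\,\dr_0$ holds with a fixed $c_0$, and by combining the \cite{CGP} approximation with the conical maximum principle so that the curvature blow-up of $\dr_\epsilon$ near $D_i$ is absorbed by the conical test functions; the constants in the resulting $C^{2,\alpha}$ bound then depend only on the chosen compact set and on the uniform data, not on $\epsilon$ or $t$.
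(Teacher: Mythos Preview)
Your proposal follows the same overall strategy as the paper: recast the flow as a parabolic Monge--Amp\`ere equation, work with the Campana--Guenancia--P\^aun approximations $\varphi_\epsilon$, invoke Kodaira's lemma on the big class to produce a barrier with log poles along an effective divisor $E$, and then run the maximum principle to get $C^0$, $\dot\varphi$, and Laplacian estimates that degenerate only along $E$, followed by local higher-order estimates off $D\cup E$ and the conical $C^{2,\alpha}$ estimate along $D\setminus E$. Your singular-Hermitian-metric formulation of Kodaira ($\Theta(h_L)\ge c_0\,\dr_0$ with analytic singularities along $E$, local potential $\psi_L$) is equivalent to the paper's explicit version (an effective divisor $E$ with section $S'$ and $\dr_t+\delta\,\ddb\log\|S'\|'^2>0$ on $[0,T_0]$); the barrier you call $\psi_L$ is the paper's $\delta\log\|S'\|'^2$.

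The one step that does not close as written is the second-order estimate. You propose to obtain it by ``exhausting $M\setminus E$ by compact sets'' and running the Aubin--Yau computation there, but a parabolic maximum-principle argument on a region with boundary gives no control at the boundary, and you have no a~priori bound there. The paper instead runs a \emph{global} maximum principle on $M$, building the barrier directly into the test function: with $\varphi_{\epsilon,\delta}=\varphi_\epsilon-\delta\log\|S'\|'^2$ one sets
\[
H=\log\operatorname{tr}_{\dr_{0,\epsilon}}\dr+C'\chi_\rho-A^2\varphi_{\epsilon,\delta}+A\log\|S'\|'^2,
\]
where $\chi_\rho$ is the auxiliary function from \cite{CGP,GP} that absorbs the bisectional-curvature blow-up of $\dr_{0,\epsilon}$ near $D$. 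Since $H\to-\infty$ along $E$, its maximum lies in $M\setminus E$, and the computation yields the global bound $\operatorname{tr}_{\dr_{0,\epsilon}}\dr\le C\|S'\|'^{-2\alpha}$, which then restricts to a uniform bound on any compact $K\Subset M\setminus E$. This is the same device you already use for the $C^0$ lower bound; once you feed $\psi_L$ into the test function for the Laplacian estimate rather than trying to localize first, the rest of your outline matches the paper, including the appeal to the Liu--Zhang/Evans--Krylov type interior estimates off $D\cup E$ and the conical $C^{2,\alpha}$ regularity along $D\setminus E$.
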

     Furthermore, if we assume the twisted canonical line bundle $K_{M}+\sum_{i=1}^{l}(1-\beta)[D_{i}])$ is big and nef, by \ref{mainthm1},
we can see that $T_{0}=\infty$. To get a more clear picture of the limiting metric, we can normalized such conical \ka-Ricci flow
equation \eqref{eq:ckrf-SNC} to 
\begin{equation}\label{eq:ckrf*-SNC}
 \dt\dr=-\dr-Ric(\dr)+2\pi\sum_{i=1}^{l}(1-\beta)[D_{i}]).
\end{equation}
We note that the solutions to these two flow equations are different only by scaling. Then similarly we have such a theorem
\begin{theorem}\label{mainthm3}
 All conditions follow \ref{mainthm2}, additionally, if the twisted canonical line bundle $K_{M}+\sum_{i=1}^{l}(1-\beta)[D_{i}])$ is big and
nef, then starting from a conic \ka\ metric $\dr^{*}$, the normalized conical \ka-Ricci flow \eqref{eq:ckrf*-SNC} converges to a current as 
t tends to infinity such that modulo the stable base locus E, the limiting current is a conical \ka-Einstein metric which is $C^{2,\alpha}$
with conic angle $2\pi\beta_{i}$ along $D_{i}\setminus E,$ and independent of the choice of the initial conic metric.
\end{theorem}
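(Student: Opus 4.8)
The plan is to use the observation recorded just before the statement that the normalized flow \eqref{eq:ckrf*-SNC} and the unnormalized flow \eqref{eq:ckrf-SNC} differ only by the rescaling $\dr(t)=e^{-t}\wt{\dr}(\tau)$, $\tau=e^{t}-1$ (the Ricci form being invariant under constant scaling). Under the big and nef hypothesis, Theorem \ref{mainthm1} gives $T_{0}=\infty$, so $t\ra\infty$ for \eqref{eq:ckrf*-SNC} corresponds to the full maximal interval of \eqref{eq:ckrf-SNC}, and at the level of cohomology $[\dr(t)]=e^{-t}[\dr_{0}]+(1-e^{-t})c_{1}(L)\ra c_{1}(L)$, where $L:=K_{M}+\sum_{i=1}^{l}(1-\beta_{i})[D_{i}]$. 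Fix the weighted volume form $\Omega_{\beta}=\Omega_{0}/\prod_{i}\norm{S_{i}}_{i}^{2(1-\beta_{i})}$ with $\Omega_{0}$ a smooth positive volume form, and the smooth closed form $\chi:=-Ric(\Omega_{0})+\sum_{i}(1-\beta_{i})\Theta_{i}\in c_{1}(L)$, chosen so that the current parts cancel, i.e. $\chi=-Ric(\Omega_{\beta})+\theta$ with $\theta:=2\pi\sum_{i}(1-\beta_{i})[D_{i}]$. Writing $\dr(t)=\hat{\dr}_{t}+\ddb{\varphi}$ along the reference path $\hat{\dr}_{t}=e^{-t}\dr_{0}+(1-e^{-t})\chi$ reduces \eqref{eq:ckrf*-SNC} to the scalar parabolic complex Monge--Ampère equation
$$\dt{\varphi}=\log\frac{(\hat{\dr}_{t}+\ddb{\varphi})^{n}}{\Omega_{\beta}}-\varphi,$$
whose stationary equation $(\chi+\ddb{\varphi_{\infty}})^{n}=e^{\varphi_{\infty}}\Omega_{\beta}$ is exactly the conic \ka-Einstein equation $Ric(\dr_{\infty})=-\dr_{\infty}+\theta$. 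As in the proof of Theorem \ref{mainthm1}, I would first solve the equations regularized by the Guenancia--P\^{a}un approximations $\Omega_{\beta,\epsilon}=\Omega_{0}/\prod_{i}(\norm{S_{i}}_{i}^{2}+\epsilon^{2})^{1-\beta_{i}}$ and let $\epsilon\ra0$ at the end.

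The core is a family of a priori estimates uniform in $t\in[0,\infty)$ and in $\epsilon$; the favorable sign of the zeroth-order term $-\varphi$ (the case $\mu=-1$) drives everything. First I would establish a uniform bound $\norm{\varphi(t)}_{C^{0}(M)}\les C$ by combining the maximum principle with a Ko{\l}odziej-type pluripotential estimate, where the bigness of $L$ is used to bound the potential even though the class $c_{1}(L)$ is only big and nef and $\log(\hat{\dr}_{t}^{n}/\Omega_{\beta})$ degenerates to $-\infty$ along the stable base locus $E$. Differentiating the equation in $t$, the function $\psi:=\dt{\varphi}$ satisfies $\dt{\psi}=\Delta_{\dr(t)}\psi-\psi+e^{-t}\,\mathrm{tr}_{\dr(t)}(\chi-\dr_{0})$, so once the forcing term is controlled the maximum principle yields $\sup_{M}\abs{\psi}\les(1+t)e^{-t}$; this essentially exponential decay is what will force genuine convergence. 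On compact subsets $K\Subset M\setminus(E\cup D)$ I would run the parabolic Aubin--Yau (Laplacian) estimate together with Evans--Krylov and Schauder theory to obtain uniform local $C^{\infty}$ bounds, and near $D_{i}\setminus E$ the conic $C^{2,\alpha}$ Schauder machinery already used for Theorem \ref{mainthm1} to obtain uniform conic $C^{2,\alpha}$ bounds. \emph{The main obstacle} is to make these higher-order estimates uniform simultaneously near both the conic divisor $D$ and the augmented base locus $E$: one must feed a negative quasi-psh weight with poles along $E$ (produced from the bigness of $L$) into each estimate to control the degeneration of the reference class, while preserving the conic regularity along $D$ supplied by the Guenancia--P\^{a}un regularization, and verify that the two mechanisms do not interfere.

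With these estimates, the decay $\sup_{M}\abs{\dt{\varphi}}\les(1+t)e^{-t}$ shows $\varphi(t)$ is Cauchy in $C^{0}(M)$, and together with the uniform higher-order bounds and interpolation it follows that $\dr(t)=\hat{\dr}_{t}+\ddb{\varphi(t)}$ converges as $t\ra\infty$ in $C^{\infty}_{loc}(M\setminus(E\cup D))$ and in the conic $C^{2,\alpha}$ topology along $D\setminus E$ to a limit $\dr_{\infty}=\chi+\ddb{\varphi_{\infty}}$. Passing to the limit in the Monge--Ampère equation identifies $\dr_{\infty}$ as a conic \ka-Einstein metric: it is smooth off $E\cup D$, is $C^{2,\alpha}$ with cone angle $2\pi\beta_{i}$ along $D_{i}\setminus E$, and solves $Ric(\dr_{\infty})=-\dr_{\infty}+\theta$ on $M\setminus E$. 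Finally, independence of the initial conic metric follows from uniqueness of the stationary solution in the negative case: if $\varphi_{\infty}$ and $\varphi_{\infty}'$ both solve $(\chi+\ddb{\varphi})^{n}=e^{\varphi}\Omega_{\beta}$, then at a maximum of $\varphi_{\infty}-\varphi_{\infty}'$ one has $\ddb{(\varphi_{\infty}-\varphi_{\infty}')}\le0$, so the Monge--Ampère comparison forces $e^{\varphi_{\infty}}\Omega_{\beta}\le e^{\varphi_{\infty}'}\Omega_{\beta}$ there and hence $\sup_{M}(\varphi_{\infty}-\varphi_{\infty}')\le0$; by symmetry $\varphi_{\infty}=\varphi_{\infty}'$, so the limit is independent of the choice of $\dr^{*}$.
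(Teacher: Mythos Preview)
Your overall architecture matches the paper's---scalar reduction of the normalized flow, Guenancia--P\u{a}un regularization, a priori estimates uniform in $t$ and $\epsilon$, passage to the limit---but there is a genuine gap in the $\dot\varphi$ step. You assert $\sup_{M}|\psi|\les(1+t)e^{-t}$ from $\dt\psi=\Delta_{\dr}\psi-\psi+e^{-t}\,\mathrm{tr}_{\dr}(\chi-\dr_{0})$, saying ``once the forcing term is controlled''. But the forcing term is \emph{not} uniformly controlled on $M$: the Laplacian estimate in this big--nef setting carries poles along the base locus, $\mathrm{tr}_{\dr_{0,\epsilon}}\dr\leq C\norm{S'}'^{-2\alpha}$, and as $t\to\infty$ the class $[\dr(t)]\to c_{1}(L)$ is only nef, so $\dr$ genuinely degenerates near $E$ and $\mathrm{tr}_{\dr}(\chi-\dr_{0})$ can be arbitrarily negative there. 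Hence the two--sided exponential decay, and with it your Cauchy--in--$C^{0}(M)$ argument, is not available. The paper gets only the one--sided decay $\dot\varphi_{\epsilon}\leq Cte^{-t}$, and not by bounding the trace: one applies the maximum principle to the auxiliary function $e^{t}\dot\varphi_{\epsilon}-\dot\varphi_{\epsilon}-\varphi_{\epsilon}-nt$, for which the computation collapses to $(\dt{}-\Delta)(\cdot)=-\mathrm{tr}_{\dr}\dr_{0,\epsilon}<0$. For the lower bound the paper inserts the Kodaira weight and obtains only $\dot\varphi_{\epsilon}\geq -C_{\delta}$, with no decay. Convergence is then deduced from \emph{eventual monotonicity}: $\dot\varphi\leq Cte^{-t}$ makes $\varphi$ eventually nonincreasing, and combined with the lower barrier $\varphi\geq -C_{\delta}+\delta\log\norm{S'}'^{2}$ this gives pointwise (hence, via the local higher--order estimates, locally smooth) convergence on $M\setminus(D\cup E)$.

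Your uniqueness argument via the maximum principle on the stationary Monge--Amp\`ere equation is the standard one and is conceptually cleaner than the paper's, but as written it has the same soft spot: you must show the supremum of $\varphi_{\infty}-\varphi_{\infty}'$ is actually attained on $M\setminus E$ (a barrier $+\eta\log\norm{S'}'^{2}$ would do). The paper instead compares the two \emph{flows}: it shows $\varphi_{\epsilon}+k\chi\leq\varphi'_{\epsilon}+k'\chi$ for all $t$ by the parabolic maximum principle (where everything is smooth), passes to $t=\infty$, and then forces equality using the mass identity $\int_{M}e^{\varphi_{\epsilon,\infty}+k\chi}(\norm{S}^{2}+\epsilon^{2})^{\beta-1}\Omega=\lim_{t\to\infty}\int_{M}\dr_{t}^{n}$, which is the same for both initial data.
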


         Later we will introduce the definitions of bigness and numerical effectiveness. For the proof, we will
make use of Kodaira's lemma and set up A priori estimates outside the base locus. Note that we still need Liu-Zhang's approach to overcome
the conic singularities.
     For simplicity, in this paper, we only study the conic metrics along only one irreducible divisor D with cone angle $2\pi\beta,$ and
in the end we will briefly discuss how to generalize the proofs to simple normal crossing case. 

\noindent{\bf Acknowledgment.} First the author wants to thank his Ph.D thesis advisor Professor Gang Tian for a lot of discussions and
encouragement. And he also wants to thank Chi Li for many useful conversations. And he also thanks CSC for partial financial
support during his Ph.D career.

\section{maximal time existence}

\subsection{approximation, and $C^{0}$-estimate}

     Now we restrict the situation to $l=1,$ i.e, there is only one irreducible divisor D. Then the unnormalized conical \ka-Ricci flow is     
\begin{equation}\label{eq:ckrf}
\left\{ \begin{array}{rcl}
&\dt\dr=-Ric(\dr)+2\pi(1-\beta)[D]\\&\dr(0)=\dr^{*}=\dr_{0}+k\ddb||S||^{2\beta}
        \end{array}\right.
\end{equation}
As \cite{TZ} \cite{ST3}, first we want to transform the conical \ka-Ricci flow equation \eqref{eq:ckrf} to Monge-Ampere flow equation.
 We set $T_{\delta}=T_{0}-\delta,$ where $T_{0}:=sup\{t|[\dr_{0}]-t(c_{1}(M)-(1-\beta)[D])>0\},$ as we defined before. We know that when
$t\in [0,T_{\epsilon}],$ the cohomology class $[\dr_{0}]-t(c_{1}(M)-(1-\beta)[D])$ is positive. Then when $t\in [0,T_{\delta}],$ there
exist a hermitian metric $||\cdot||$ on the holomorphic line bundle associated to the divisor D and a volume form $\Omega$ on M, such that
\begin{equation}\label{eq:wt}
 \dr_{t}=\dr_{0}-t(Ric(\Omega)-(1-\beta)R(||\cdot||))>0
\end{equation}
where $R(||\cdot||):=-\ddb\log||\cdot||^{2}$ represents the curvature of the
bundle $[D]$. Now we can write $$\overline{\dr_{t}}=\dr_{t}+k\ddb||S||^{2\beta},$$ where k is so small that $\overline{\dr_{t}}$ is positive
as $t\in [0,T_{\delta}].$ Then if we set $\dr=\overline{\dr_{t}}+\ddb\varphi,$ we can write the conical \ka-Ricci flow \eqref{eq:ckrf}
as following: $$\dt(\overline{\dr_{t}}+\ddb\varphi)=\ddb\log(\overline{\dr_{t}}+\ddb\varphi)^{n}+(1-\beta)\ddb\log|S|^{2}.$$
Plug the equation \eqref{eq:wt} into it, we obtain a Monge-Ampere flow equation:
\begin{equation}\label{eq:conic-ma}
 \dt\varphi=\log\frac{(\overline{\dr_{t}}+\ddb\varphi)^{n}}{\Omega}+\log||S||^{2(1-\beta)}
\end{equation}
To approximate the solution to this equation, we first use the smoothing metric in \cite{CGP} that
\begin{equation}\label{eq:appro-conic metric}
 \dr_{t,\epsilon}=\dr_{t}+k\ddb\chi(\epsilon^{2}+||S||^{2}),
\end{equation}
where $$\chi(\epsilon^{2}+t)=\beta\int_{0}^{t}\frac{(\epsilon^{2}+r)^{\beta}-\epsilon^{2\beta}}{r}dr.$$
Before approximating the flow equation, we want to introduce some useful properties of this function $\chi(\epsilon^{2}+t)$. First, we can
find that for each $\epsilon>0,$ $\dr_{t,\epsilon}$ is a smooth \ka\ metric, and as $\epsilon$ tends to 0, $\dr_{t,\epsilon}$ converges to
a conic metric $\overline{\dr_{t}}$ in the sense of currents globally on M and in $C^{\infty}_{loc}$ sense outside D. Moreover, for each
$\epsilon>0,$ this function is smooth, and there exist constants $C>0$ and $\gamma>0$ independent of $\epsilon$ such that as t is finite
$0\leq\chi(\epsilon^{2}+t)\leq C,$ and $\dr_{0,\epsilon}\geq\gamma\dr_{0}.$ As for $t\in[0,T_{\delta}]$ $\dr_{t}$ is comparable to $\dr_{0}$
we can still use a constant $\gamma$ such that $\dr_{t,\epsilon}\geq\gamma\dr_{0}.$\\

    Now we can introduce our approximation equation of \eqref{eq:conic-ma} as following
\begin{equation}\label{eq:conic-ma-appro}
 \left\{ \begin{array}{rcl}
&\dt\varphi_{\epsilon}=\log\frac{(\dr_{t,\epsilon}+\ddb\varphi_{\epsilon})^{n}}{\Omega}
        +\log(||S||^{2}+\epsilon^{2})^{1-\beta}\\&\varphi_{\epsilon}(\cdot,0)=0
        \end{array}\right.
\end{equation}
Equivalently, this equation can be written as the form of generalized \ka-Ricci flow:
\begin{equation}\label{eq:ckrf-appro}
 \left\{ \begin{array}{rcl}
&\dt\dr_{\varphi_{\epsilon}}=-Ric(\dr_{\varphi_{\epsilon}})+(1-\beta)\ddb\log\frac{||S||^{2}+\epsilon^{2}}{||\cdot||^{2}}
\\&\dr_{\varphi_{\epsilon}}(\cdot,0)=\dr_{0,\epsilon}
        \end{array}\right.
\end{equation}
Note that the elliptic version of such approximation equations was set up in \cite{Ti12}. The main steps here are also similar to
Tian's approximation. We need to set up $C^{0}$-estimate, $C^{2}$-estimate and high order derivative estimate step by step and finally
complete approximation argument. In the rest of this subsection we will complete $C^{0}$-estimate and leave other steps to the
next subsection.\\

    Now let's consider the equation \eqref{eq:conic-ma-appro}. We can rewrite this equation as following
$$\dt\varphi_{\epsilon}=\log\frac{(\dr_{t,\epsilon}+\ddb\varphi_{\epsilon})^{n}(||S||^{2}+\epsilon^{2})^{1-\beta}}{\Omega}.$$
To get an upper bound for $\varphi_{\epsilon},$ we can make use of maximal principle to get that
$$\dt\sup\varphi_{\epsilon}\leq\sup\log\frac{\dr_{t,\epsilon}^{n}(||S||^{2}+\epsilon^{2})^{1-\beta}}{\Omega}.$$
Recall the definition of the function $\chi(\epsilon^{2}+t)$, we know from \cite{CGP},
$$\ddb\chi(\epsilon^{2}+||S||^{2})=\frac{\beta^{2}\sqrt{-1}DS\wedge\bar{DS}}{(\epsilon^{2}+||S||^{2})^{1-\beta}}-
\beta((\epsilon^{2}+||S||^{2})^{\beta}-\epsilon^{2\beta})R(||\cdot||).$$
From this computation, we observe that near the divisor D, $$\frac{\dr_{t,\epsilon}^{n}(||S||^{2}+\epsilon^{2})^{1-\beta}}{\Omega}
\approx C,$$ which is independent of $\epsilon.$ So we can get a uniform upper bound that $\varphi_{\epsilon}\leq C_{1}.$
Do the same argument for the lower bound, make use of maximal principle again, we can give a uniform $C^{0}$-estimate that
$$|\varphi_{\epsilon}|\leq C_{1}.$$

   Next we hope to bound the time derivative for $\varphi_{\epsilon}.$ We can apply the technique developed in \cite{ST3}. For simplicity,
we write $\rho=-Ric(\Omega)+(1-\beta)R(||\cdot||)=\dot{\dr}_{t,\epsilon}.$ Take the derivative of \eqref{eq:conic-ma-appro}, we have
\begin{equation}\label{eq:derivative-conic-ma}
 \dt\dot{\varphi}_{\epsilon}=\Delta\dot{\varphi}_{\epsilon}+tr_{\dr}\rho,
\end{equation}
where $\dr=\dr_{t,\epsilon}+\ddb\varphi_{\epsilon},$ and $\Delta$ is the Laplacian w.r.t $\dr.$ First, we compute that
\begin{align*}
 (\dt-\Delta)(t\dot{\varphi}_{\epsilon}-\varphi_{\epsilon}-nt)=&t\,tr_{\dr}\rho+n-tr_{\dr}\dr_{t,\epsilon}-n\\
=&-tr_{\dr}\dr_{0,\epsilon}<0.
\end{align*}
From this we have $\dot{\varphi}_{\epsilon}\leq n+\frac{C_{1}}{t}.$ Combined with the equation \eqref{eq:conic-ma-appro} at time t=0,
and local existence of parabolic equation, we obtain a uniform upper bound for $\dot{\varphi}_{\epsilon}.$
Now we try to derive the lower bound for $\dot{\varphi}_{\epsilon},$
\begin{align*}
 (\dt-\Delta)(\dot{\varphi}_{\epsilon}+A\varphi_{\epsilon}-n\log t)=&tr_{\dr}(\rho+A\dr_{t,\epsilon})+A\log\frac{\dr^{n}(||S||^{2}
+\epsilon^{2})^{1-\beta}}{\Omega}-An-\frac{n}{t}\\ \geq& C(\frac{\dr_{0,\epsilon}^{n}}{\dr^{n}})^{\frac{1}{n}}+A\log\frac{\dr^{n}(||S||^{2}
+\epsilon^{2})^{1-\beta}}{\Omega}-An-\frac{n}{t}\\ \geq& C_{1}(\frac{\dr_{0,\epsilon}^{n}}{\dr^{n}})^{\frac{1}{n}}-\frac{C_{2}}{t}.
\end{align*}
Let's explain these two inequalities. For the first, note that we can choose A sufficiently large such that $\rho+A\dr_{t,\epsilon}\geq
\dr_{0,\epsilon}$ in the time interval, then this inequality follows from Schwarz inequality. For the second, note that in the proof of
$C^{0}$-estimate, we have got that $\frac{\dr_{0,\epsilon}^{n}(||S||^{2}+\epsilon^{2})^{1-\beta}}{\Omega}\approx C,$ then this inequality
follows from the behavior of logarithmic functions. Then by maximal principle, at the minimal point of the function $\dot{\varphi}_
{\epsilon}+A\varphi_{\epsilon}-n\log t,$ we have that $\dr^{n}\geq C_{4}t^{n}\dr_{0,\epsilon}^{n}.$ Make use of this inequality and the
relation of $\dr_{0,\epsilon}$ and $\Omega$ again, we get that
\begin{align*}
\dot{\varphi}_{\epsilon}+A\varphi_{\epsilon}-n\log t&=\log\frac{\dr^{n}(||S||^{2}+\epsilon^{2})^{1-\beta}}{\Omega}+A\varphi_{\epsilon}
-n\log t\\ &\geq\log\frac{C_{4}t^{n}\dr_{0,\epsilon}^{n}(||S||^{2}+\epsilon^{2})^{1-\beta}}{\Omega}+A\varphi_{\epsilon}-n\log t\geq C_{5}.
\end{align*}
As $\varphi_{\epsilon}$ is uniformly bounded, we get a uniform lower bound for $\dot{\varphi}_{\epsilon}.$

\subsection{$C^{2}$-estimate, and high order estimate outside the divisor}

    Now we continue our proof. We want to set up a uniform $C^{2}$-estimate and high order estimate for $\varphi_{\epsilon}.$ First, like
\cite{ST3} \cite{CGP} \cite{LZ}, we prove a Laplacian estimate, which is essentially an application of generalized Schwarz lemma:
\begin{theorem}\label{thm-laplacian}
Let  $\varphi_{\epsilon}$ solve the equation \eqref{eq:conic-ma-appro}. As we have
$$|\varphi_{\epsilon}|\leq C,\,|\dot{\varphi}_{\epsilon}|\leq C,$$ on $[0,T_{\delta}],$ then on this time interval, there exists a uniform
constant A which is independent of $\epsilon,$ such that $$A^{-1}\dr_{0,\epsilon}\leq \dr_{t,\epsilon}+\ddb\varphi_{\epsilon}\leq
A\dr_{0,\epsilon}.$$
\end{theorem}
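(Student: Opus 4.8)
The plan is to establish the upper bound $\dr\leq A\dr_{0,\epsilon}$ by a parabolic second order (generalized Schwarz lemma) estimate and to deduce the lower bound $A^{-1}\dr_{0,\epsilon}\leq\dr$ from it together with a uniform two-sided bound on the volume ratio; throughout, the decisive issue is that every constant stay independent of $\epsilon$. Here I write $\dr=\dr_{t,\epsilon}+\ddb\varphi_{\epsilon}$ for the solution metric and $\Delta$ for its Laplacian, and I set $\eta_{\epsilon}=(1-\beta)\ddb\log\frac{||S||^{2}+\epsilon^{2}}{||\cdot||^{2}}$ for the twisting term of \eqref{eq:ckrf-appro}.

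First I would record a uniform volume-ratio bound. Rewriting \eqref{eq:conic-ma-appro} as $\dr^{n}=e^{\dot{\varphi}_{\epsilon}}(||S||^{2}+\epsilon^{2})^{-(1-\beta)}\Omega$ and combining it with the relation $\dr_{0,\epsilon}^{n}(||S||^{2}+\epsilon^{2})^{1-\beta}/\Omega\approx C$ found in the $C^{0}$-estimate, the hypotheses $|\varphi_{\epsilon}|\leq C$ and $|\dot{\varphi}_{\epsilon}|\leq C$ give constants independent of $\epsilon$ with
\[
c\leq\frac{\dr^{n}}{\dr_{0,\epsilon}^{n}}\leq C.
\]
Writing $\lambda_{1},\dots,\lambda_{n}$ for the eigenvalues of $\dr$ relative to $\dr_{0,\epsilon}$, the elementary inequality $tr_{\dr_{0,\epsilon}}\dr\leq\frac{1}{(n-1)!}(tr_{\dr}\dr_{0,\epsilon})^{n-1}\,\dr^{n}/\dr_{0,\epsilon}^{n}$ shows that the two trace bounds are interchangeable once the volume ratio is controlled; likewise $tr_{\dr_{0,\epsilon}}\dr\geq n(\dr^{n}/\dr_{0,\epsilon}^{n})^{1/n}\geq c>0$. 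Hence it suffices to bound $tr_{\dr_{0,\epsilon}}\dr$ from above, or equivalently $tr_{\dr}\dr_{0,\epsilon}$.

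The heart of the argument is the generalized Schwarz lemma. I would take $\dr_{0,\epsilon}$ as a fixed background (legitimate since on $[0,T_{\delta}]$ it is uniformly equivalent to $\dr_{t,\epsilon}$) and invoke the key fact from \cite{CGP} that the regularized conic metrics $\dr_{0,\epsilon}$ have holomorphic bisectional curvature bounded below by a constant $-K$ independent of $\epsilon$. Set $H=\log tr_{\dr_{0,\epsilon}}\dr-A\varphi_{\epsilon}$ for a large $A$. The standard parabolic Schwarz computation, in which the $Ric(\dr)$ contributions from $\dt$ and from $\Delta\log tr_{\dr_{0,\epsilon}}\dr$ cancel, gives
\[
(\dt-\Delta)\log tr_{\dr_{0,\epsilon}}\dr\leq K\,tr_{\dr}\dr_{0,\epsilon}+\frac{tr_{\dr_{0,\epsilon}}\eta_{\epsilon}}{tr_{\dr_{0,\epsilon}}\dr},
\]
where the first term comes from the lower curvature bound; here one uses that $\dr_{0,\epsilon}$ is built precisely to absorb the conic singularity of $\eta_{\epsilon}$, so that $tr_{\dr_{0,\epsilon}}\eta_{\epsilon}$ is uniformly bounded (again a \cite{CGP}-type computation), and with the lower bound on $tr_{\dr_{0,\epsilon}}\dr$ the second term is $\leq C$. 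Since $\Delta\varphi_{\epsilon}=n-tr_{\dr}\dr_{t,\epsilon}$ and $\dr_{t,\epsilon}\ges\dr_{0,\epsilon}$ uniformly, the term $-A\varphi_{\epsilon}$ contributes $-A\,tr_{\dr}\dr_{t,\epsilon}+A(n-\dot{\varphi}_{\epsilon})$; choosing $A$ large relative to $K$ therefore yields
\[
(\dt-\Delta)H\leq-tr_{\dr}\dr_{0,\epsilon}+C.
\]

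I would finish by the parabolic maximum principle on the compact space-time $[0,T_{\delta}]\times M$, available since for each fixed $\epsilon>0$ all data are smooth. At $t=0$ one has $\varphi_{\epsilon}\equiv0$ and $\dr=\dr_{0,\epsilon}$, so $H$ is bounded there; at an interior maximum $(\dt-\Delta)H\geq0$, whence $tr_{\dr}\dr_{0,\epsilon}\leq C$ at that point, and the linear-algebra inequality above together with the volume-ratio bound then controls $tr_{\dr_{0,\epsilon}}\dr$, hence $H$, at the maximum. As $\varphi_{\epsilon}$ is uniformly bounded, this bounds $tr_{\dr_{0,\epsilon}}\dr$ everywhere, i.e.\ $\dr\leq A\dr_{0,\epsilon}$, and the lower bound follows from the reduction in the second step. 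The main obstacle is entirely the $\epsilon$-uniformity: both the lower bisectional-curvature bound $-K$ for $\dr_{0,\epsilon}$ (without which the coefficient of $tr_{\dr}\dr_{0,\epsilon}$ could blow up and the choice of $A$ would fail) and the uniform control of the singular twist $tr_{\dr_{0,\epsilon}}\eta_{\epsilon}$ near $D$ rest on the \cite{CGP} regularization; I would also verify carefully the precise form of the Schwarz inequality and the sign of the twisting contribution, since these fix how large $A$ must be taken.
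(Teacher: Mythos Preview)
Your overall architecture---volume-ratio control, a parabolic Schwarz inequality for $\log tr_{\dr_{0,\epsilon}}\dr$, subtraction of $A\varphi_{\epsilon}$, maximum principle, then the algebraic reduction to the lower bound---matches the paper. The gap is the single sentence on which everything hinges: you assert that the holomorphic bisectional curvature of $\dr_{0,\epsilon}$ is bounded below by a constant $-K$ independent of $\epsilon$. For the Campana--Guenancia--P\^aun regularization used here this is \emph{false}. What \cite{CGP}, \cite{GP} actually give is
\[
R_{i\bar i j\bar j}(\dr_{0,\epsilon})\ \ge\ -C_{1}-\frac{C_{2}}{(||S||^{2}+\epsilon^{2})^{1-\beta}},
\]
which blows up along $D$ as $\epsilon\to 0$. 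With only this bound the coefficient in front of $tr_{\dr}\dr_{0,\epsilon}$ in your Schwarz inequality is not a uniform constant, so no uniform choice of $A$ can absorb it and the maximum-principle step yields nothing $\epsilon$-independent. The same objection applies to your claim that $tr_{\dr_{0,\epsilon}}\eta_{\epsilon}$ is uniformly bounded: computing $\ddb\log(||S||^{2}+\epsilon^{2})$ and tracing against $\dr_{0,\epsilon}$ still leaves a factor of order $\epsilon^{-2\beta}$ on the divisor.

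The paper's remedy is to introduce an auxiliary barrier $\chi_{\rho}(||S||^{2}+\epsilon^{2})$ with a suitable $\rho$ and to work with $H=\log tr_{\dr_{0,\epsilon}}\dr+C'\chi_{\rho}-B\varphi_{\epsilon}$ instead of your $H$. The point is that the curvature blow-up can be rewritten as $R_{i\bar i j\bar j}(\dr_{0,\epsilon})\ge -C-C'(\ddb\chi_{\rho})_{i\bar i}$, so that the dangerous contribution to the Schwarz computation is exactly $C'\Delta\chi_{\rho}$, which is then cancelled by the extra $+C'\chi_{\rho}$ in $H$; the twist and scalar-curvature pieces are likewise combined into $\Delta_{\dr_{0,\epsilon}}\log\bigl((||S||^{2}+\epsilon^{2})^{1-\beta}\dr_{0,\epsilon}^{n}/\Omega\bigr)$ and bounded above by $C\dr_{0,\epsilon}+C\ddb\chi_{\rho}$. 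After these substitutions one gets the clean inequality $(\dt-\Delta)H\le C-tr_{\dr}\dr_{0,\epsilon}$ with genuinely uniform constants, and from there your concluding steps go through verbatim. In short: your plan is right, but the $\chi_{\rho}$-correction is not optional---without it the $\epsilon$-uniformity you flag as ``the main obstacle'' is simply not achieved.
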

\begin{proof}
 First we take an holomorphic orthonormal coordinates $(z_{1},\cdots,z_{n})$ at a point p for the metric $\dr_{0,\epsilon}$, say, $g_{0i
\bar{j}}=\delta_{ij},$ such that $g_{i\bar{j}}(\dr_{t,\epsilon})=\lambda_{i}\delta_{ij},\varphi_{\epsilon\,i\bar{j}}=\varphi_{\epsilon\
,i\bar{i}}\delta_{ij}.$ Write $g_{i\bar{j}}$ as the metric
 of $\dr=\dr_{t,\epsilon}+\ddb\varphi_{\epsilon}$ and assume that under such coordinates, $\frac{\partial g_{i\bar{j}}}{\partial z_{k}}=0.$
From approximation flow equations \eqref{eq:conic-ma-appro} \eqref{eq:ckrf-appro}, we compute that
\begin{align*}
 (\dt-\Delta)\log\,tr_{\dr_{0,\epsilon}}\dr&=\frac{(\dt-\Delta)tr_{\dr_{0,\epsilon}}\dr}{tr_{\dr_{0,\epsilon}}}+\frac{|\nabla\,tr_{\dr_{0,
\epsilon}}\dr|^{2}}{|tr_{\dr_{0,\epsilon}}\dr|^{2}}\\&=\frac{1}{tr_{\dr_{0,\epsilon}}\dr}\{g_{0}^{i\bar{i}}(-R_{i\bar{i}}+(1-\beta)
\partial_{i}\partial_{\bar{i}}\log\frac{||S||^{2}+\epsilon^{2}}{||\cdot||^{2}})\\&-g_{i\bar{i}}g^{k\bar{k}}(-R^{i}_{k\bar{k}l}g_{0}^{l
\bar{i}}+R_{i\bar{i}k\bar{k}}(\dr_{0,\epsilon})+\sum\limits_{j}\frac{\partial g_{0i\bar{j}}}{\partial\bar{z_{k}}}\frac{\partial g_{0j
\bar{i}}}{\partial z_{k}})\}+\frac{|\nabla\,tr_{\dr_{0,\epsilon}}\dr|^{2}}{|tr_{\dr_{0,\epsilon}}\dr|^{2}}\\&=\frac{1}{tr_{\dr_{0,
\epsilon}}\dr}\{(1-\beta)\Delta_{\dr_{0,\epsilon}}\log\frac{||S||^{2}+\epsilon^{2}}{||\cdot||^{2}}-g_{i\bar{i}}g^{k\bar{k}}R_{i\bar{i}
k\bar{k}}(\dr_{0,\epsilon})\\&-g_{i\bar{i}}g^{k\bar{k}}\sum\limits_{j}\frac{\partial g_{0i\bar{j}}}{\partial\bar{z_{k}}}\frac{\partial
 g_{0j\bar{i}}}{\partial z_{k}}\}+\frac{1}{|tr_{\dr_{0,\epsilon}}\dr|^{2}}\sum\limits_{i,j}g^{k\bar{k}}g_{i\bar{i}}g_{j\bar{j}}\frac
{\partial g_{0i\bar{i}}}{\partial z_{k}}\frac{\partial g_{0j\bar{j}}}{\partial\bar{z_{k}}}\\&\leq\frac{1}{tr_{\dr_{0,\epsilon}}\dr}
\{(1-\beta)\Delta_{\dr_{0,\epsilon}}\log\frac{||S||^{2}+\epsilon^{2}}{||\cdot||^{2}}-R(\dr_{0,\epsilon})\\&-\sum\limits_{i\leq j}R_{i
\bar{i}j\bar{j}}(\dr_{0,\epsilon})(\frac{\lambda_{i}+\varphi_{\epsilon\,i\bar{i}}}{\lambda_{j}+\varphi_{\epsilon\,j\bar{j}}}+\frac{
\lambda_{j}+\varphi_{\epsilon\,j\bar{j}}}{\lambda_{i}+\varphi_{\epsilon\,i\bar{i}}}-2)\}.
\end{align*}
As \cite{LZ}, we choose a function $\chi_{\rho}(||S||^{2}+\epsilon^{2})=
\rho\int_{0}^{||S||^{2}}\frac{(\epsilon^{2}+r)^{\rho}-\epsilon^{2\rho}}{r}dr.$ By the computation in \cite{CGP} \cite{GP} we have that
$$R_{i\bar{i}j\bar{j}}(\dr_{0,\epsilon})\geq -C_{1}-\frac{C_{2}}{(||S||^{2}+\epsilon^{2})^{1-\beta}}.$$ Meanwhile, we have
$$\ddb\chi_{\rho}(\epsilon^{2}+||S||^{2})=\frac{\rho^{2}\sqrt{-1}DS\wedge\bar{DS}}{(\epsilon^{2}+||S||^{2})^{1-\rho}}-
\rho((\epsilon^{2}+||S||^{2})^{\rho}-\epsilon^{2\rho})R(||\cdot||),$$ so if we choose suitable constants $\rho, C$ we can obtain that
$$R_{i\bar{i}j\bar{j}}(\dr_{0,\epsilon})\geq -C-C'\chi_{\rho i\bar{i}},$$ then we have such estimate
\begin{align*}
 &\frac{1}{tr_{\dr_{0,\epsilon}}\dr}\{-\sum\limits_{i\leq j}R_{i\bar{i}j\bar{j}}(\dr_{0,\epsilon})(\frac{\lambda_{i}+\varphi_{\epsilon
\,i\bar{i}}}{\lambda_{j}+\varphi_{\epsilon\,j\bar{j}}}+\frac{\lambda_{j}+\varphi_{\epsilon\,j\bar{j}}}{\lambda_{i}+\varphi_{\epsilon
\,i\bar{i}}}-2)\}\\ \leq&\frac{1}{\sum\limits_{i}(\lambda_{i}+\varphi_{\epsilon\,i\bar{i}})}\sum\limits_{i<j}\{\frac{\lambda_{i}+
\varphi_{\epsilon\,i\bar{i}}}{\lambda_{j}+\varphi_{\epsilon\,j\bar{j}}}(C+C'\chi_{\rho j\bar{j}})+\frac{\lambda_{j}+\varphi_{\epsilon\,j
\bar{j}}}{\lambda_{i}+\varphi_{\epsilon\,i\bar{i}}}(C+C'\chi_{\rho i\bar{i}})\}\\ \leq&\sum\limits_{i}\frac{C+C'\chi_{\rho i\bar{i}}}
{\lambda_{i}+\varphi_{\epsilon\,i\bar{i}}}=Ctr_{\dr}{\dr_{0,\epsilon}}+C'\Delta\chi_{\rho}.
\end{align*}
Now we want to estimate the other term. First We have that
\begin{align*}
&(1-\beta)\Delta_{\dr_{0,\epsilon}}\log\frac{||S||^{2}+\epsilon^{2}}{||\cdot||^{2}}-R(\dr_{0,\epsilon})\\
=&\Delta_{\dr_{0,\epsilon}}\log\frac{(||S||^{2}+\epsilon^{2})^{1-\beta}\dr_{0,\epsilon}^{n}}
{\Omega}-tr_{\dr_{0,\epsilon}}Ric(\Omega)+(1-\beta)tr_{{\dr_{0,\epsilon}}}R(||\cdot||).
\end{align*}
Now recall that $\dr_{0,\epsilon}\geq\gamma\dr_{0},$ and $Ric(\Omega)$, $R(||\cdot||)$ are uniformly bounded independent of $\epsilon,$
moreover by \cite{CGP} \cite{GP}, $$\ddb\log\frac{(||S||^{2}+\epsilon^{2})^{1-\beta}\dr_{0,\epsilon}^{n}}{\Omega}\leq C\dr_{0,\epsilon}+
C\ddb\chi_{\rho},$$ we can bound this term.
Considering that $tr_{\dr_{0,\epsilon}}\dr\cdot tr_{\dr}\dr_{0,\epsilon}\geq n$, we have
\begin{align*}
&(\dt-\Delta)(\log\,tr_{\dr_{0,\epsilon}}\dr+C'\chi_{\rho}-B\varphi_{\epsilon})\\ \leq&Ctr_{\dr}{\dr_{0,\epsilon}}+Ctr_{\dr}\dr_{0,
\epsilon}-B\dot{\varphi}_{\epsilon}+Bn-Btr_{\dr}\dr_{t,\epsilon}.
\end{align*}
As $\dr_{0,\epsilon}$ and $\dr_{t,\epsilon}$ are equivalent, and $\dot{\varphi}_{\epsilon}$ are uniformly bounded, choose a suitable
 constant B, we obtain that
$$(\dt-\Delta)(\log\,tr_{\dr_{0,\epsilon}}\dr+C'\chi_{\rho}-B\varphi_{\epsilon})\leq C-tr_{\dr}\dr_{0,\epsilon}.$$
By the maximal principle, at the maximal point p of $\log\,tr_{\dr_{0,\epsilon}}\dr+C'\chi_{\rho}-B\varphi_{\epsilon}$ we have
 $tr_{\dr}\dr_{0,\epsilon}(p)\leq C.$ By the approximation flow equation \eqref{eq:conic-ma-appro}, we know that
$$\frac{\dr^{n}}{\dr_{0,\epsilon}^{n}}=e^{\dot{\varphi}_{\epsilon}}\frac{\Omega}{(||S||^{2}+\epsilon^{2})^{1-\beta}\dr_{0,\epsilon}^{n}}$$
is uniformly bounded from above and away from 0, then we obtain that at that point p,
$$tr_{\dr_{0,\epsilon}}\dr(p)\leq\frac{\dr^{n}}{\dr_{0,\epsilon}}(tr_{\dr}\dr_{0,\epsilon})(p)^{n-1}\leq C.$$
As at that point p, $\log\,tr_{\dr_{0,\epsilon}}\dr+C'\chi_{\rho}-B\varphi_{\epsilon}$ attains the maximal, we conclude that on the
whole manifold M, $tr_{\dr_{0,\epsilon}}\dr\leq C$. Now make use of the inequality above again we have that
$$tr_{\dr}\dr_{0,\epsilon}\leq \frac{\dr^{n}}{\dr_{0,\epsilon}^{n}}(tr_{\dr_{0,\epsilon}}\dr)^{n-1}\leq C,$$
which gives us a uniform constant $A>0$ such that
$$A^{-1}\dr_{0,\epsilon}\leq \dr_{t,\epsilon}+\ddb\varphi_{\epsilon}\leq A\dr_{0,\epsilon}.$$
\end{proof}
Now let's consider high order estimates. We know that until now we don't have a uniform $C^{3}$-estimate for conic \ka-Einstein metrics.
However, to get $C^{\infty}$-convergence away from the divisor, we only need to have local high order estimates away from the divisor.
Note that from the two forms of approximation flow equations \eqref{eq:conic-ma-appro} \eqref{eq:ckrf-appro}, compare with \cite{LZ}, we
find that our generalized \ka-Ricci flow has the form that
\begin{equation}\label{eq:gkrf}
 \dt\dr=-Ric(\dr)+\theta,
\end{equation}
where $\theta=(1-\beta)\ddb\log\frac{||S||^{2}+\epsilon^{2}}
{||\cdot||^{2}}$ is uniformly bounded away from the divisor. And we can also define that
$$S=|\nabla_{0}g|_{\dr}^{2}=g^{i\bar{j}}g^{k\bar{l}}g^{p\bar{q}}\nabla_{0i}g_{k\bar{q}}\overline{\nabla}_{0j}g_{p\bar{l}},$$ where g is
the metric of $\dr=\dr_{t}+\ddb\varphi=\dr_{0}-t(Ric(\Omega)-(1-\beta)R(||\cdot||))+\ddb\varphi,$ and $\nabla_{0}$ denotes the covariant
 derivatives with respect to $\dr_{0}.$ Then here we can apply Proposition 2.2 in \cite{LZ} directly:
\begin{theorem}\label{thm-highorder}
 Let $\dr=\dr_{t}+\ddb\varphi$ solve the generalized \ka-Ricci flow \eqref{eq:gkrf} and satisfy
$$A^{-1}\dr_{0}\leq\dr\leq A\dr_{0}\qquad on\qquad B_{r}(p)\times [0,T].$$
Then there exist constant $C',C''$ such that $$S\leq\frac{C'}{r^{2}},\qquad |Rm(\dr)|^{2}\leq\frac{C''}{r^{4}}$$ on $B_{\frac{r}{2}}(p)
\times [0,T]$. Here $C'$ depends on $\dr_{0}, T, A, ||\varphi(\cdot,0)||_{C^{3}(B_{r}(p))}, ||\theta||_{C^{1}(B_{r}(p))}$ and $C''$ depends
on $\dr_{0}, T, A, ||\varphi(\cdot,0)||_{C^{4}(B_{r}(p))}, ||\theta||_{C^{2}(B_{r}(p))}.$
\end{theorem}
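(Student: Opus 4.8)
The plan is to read this statement as a pair of local interior estimates, of Calabi--Yau (third order) and Shi (curvature) type, for the twisted flow \eqref{eq:gkrf}; this is exactly Proposition~2.2 of \cite{LZ}, so the real point is to check that the twisting term $\theta$ introduces no new difficulty. The decisive reduction is that, under the hypothesis $A^{-1}\dr_{0}\le\dr\le A\dr_{0}$ on $B_{r}(p)\times[0,T]$, every $\dr$-contraction is comparable to the corresponding $\dr_{0}$-contraction with constants depending only on $A$; since $\dr_{0}$ is a fixed smooth \ka\ metric and $\theta$ is smooth on $B_{r}(p)$, the reference curvature $Rm(\dr_{0})$, its derivatives, and $\nabla_{0}\theta,\nabla_{0}^{2}\theta$ are all bounded there. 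I would then bound $S$ and $|Rm(\dr)|^{2}$ in turn by a cutoff maximum principle on the parabolic cylinder.

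First I would compute the evolution of $S=|\nabla_{0}g|_{\dr}^{2}$. Writing $\Psi=\nabla_{0}g$ for the tensorial difference of the Chern connections of $\dr$ and $\dr_{0}$, the standard \ka\ computation applied to \eqref{eq:gkrf} produces a good negative term $-|\nabla\Psi|_{\dr}^{2}-|\bar\nabla\Psi|_{\dr}^{2}$ together with errors that are at most linear in $Rm(\dr_{0})$, quadratic in $\Psi$, and linear in $\nabla_{0}\theta$; crucially only the fixed curvature $Rm(\dr_{0})$ enters, so by the reduction these collect into $(\dt-\Delta)S\le-|\nabla\Psi|_{\dr}^{2}-|\bar\nabla\Psi|_{\dr}^{2}+C(S+1)$ with $C=C(\dr_{0},A,T,||\theta||_{C^{1}(B_{r}(p))})$. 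The term $C(S+1)$ is absorbed exactly as in Theorem~\ref{thm-laplacian}: the companion quantity $\mathrm{tr}_{\dr_{0}}\dr$ has parabolic evolution carrying a good negative $-cS$ contribution, so I apply the maximum principle to $\eta^{2}S+\Lambda\,\mathrm{tr}_{\dr_{0}}\dr$ for a cutoff $\eta\ge0$ with $\eta\equiv1$ on $B_{r/2}(p)$, $\supp\eta\subset B_{r}(p)$ and $|\nabla_{0}\eta|_{\dr_{0}}^{2}+|\Delta_{0}\eta|\le C/r^{2}$, and a large $\Lambda$. The cross term $-2\,\mathrm{Re}\,\jb{\nabla\eta^{2},\nabla S}$ is absorbed into the good term via $|\nabla S|\le2S^{1/2}|\nabla\Psi|$ and Young's inequality (the \ka\ Laplacian of the fixed cutoff carries no connection terms and is comparable to $\Delta_{0}\eta^{2}$, hence harmless). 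On the parabolic boundary the quantity reduces to its $t=0$ value, bounded by $||\varphi(\cdot,0)||_{C^{3}(B_{r}(p))}$; an interior maximum then yields $S\le C'/r^{2}$ on $B_{r/2}(p)\times[0,T]$.

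For the curvature bound I would use $Rm(\dr)=Rm(\dr_{0})+\nabla_{0}\Psi+\Psi\ast\Psi$, so that, $S$ (hence $|\Psi|$) being already controlled, it suffices to estimate the top-order quantity $W=|\nabla_{0}\Psi|_{\dr}^{2}$, comparable to $|Rm(\dr)|^{2}$ up to bounded terms. Its evolution again has a good negative term $-|\nabla\nabla_{0}\Psi|_{\dr}^{2}-|\bar\nabla\nabla_{0}\Psi|_{\dr}^{2}$ and lower-order errors in $\nabla_{0}^{2}\theta$ and $Rm(\dr_{0})$ that are bounded by the reduction, but it also contains a genuinely cubic term $\sim|Rm(\dr)|\,W\sim W^{3/2}$ coming from the curvature self-interaction of the flow. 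This cubic term is the step I expect to be the main obstacle: it is not dominated by the localized good gradient term on its own. It is defeated in the standard Calabi/Shi manner, by running the maximum principle on $W$ coupled to the quantities already bounded at the previous order (an auxiliary combination of $\eta^{2}W$ with $S$), so that the good gradient term and the metric equivalence close the estimate; the outcome is $W\le C''/r^{4}$, hence $|Rm(\dr)|^{2}\le C''/r^{4}$ on $B_{r/2}(p)\times[0,T]$, and the one extra derivative accounts for the dependence of $C''$ on $||\varphi(\cdot,0)||_{C^{4}}$ and $||\theta||_{C^{2}}$.
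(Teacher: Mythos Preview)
Your proposal is correct and matches the paper's approach: the paper does not give an independent proof at all but simply states that ``here we can apply Proposition~2.2 in \cite{LZ} directly,'' and your sketch is precisely the standard Calabi/Shi localization argument underlying that proposition, with the correct observation that the twisting term $\theta$ only contributes bounded lower-order errors controlled by $\|\theta\|_{C^1}$ and $\|\theta\|_{C^2}$. In effect you have supplied the content of the citation rather than just invoking it.
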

Locally outside the divisor, metrics $\dr_{0}, \dr_{0,\epsilon}, \dr_{t,\epsilon}$ are equivalent, and derivatives of these metrics are
also uniformly bounded on the time interval, then we have $C^{3}$-local estimates for $\varphi_{\epsilon}$ under the metric $\dr_{0}.$
By this theorem, using bootstrap methods we can also get local uniform high order estimates outside the divisor D.

\subsection{uniqueness and convergence}
  In this subsection we want to examine how the approximation solutions converge and what the limit likes. First, in the sections above we
proved that on time interval $[0,T_{\delta}]$ we have a priori estimates for the solution of approximation flow equation \eqref
{eq:conic-ma-appro}. Now we hope to solve this equation on $[0,T).$ What remains to do is to prove the solution is independent of the
choice of $\delta.$ We can prove this as \cite{TZ}. We recall that for each $\delta>0,$ we can find a volume form $\Omega$ such that
$\dr_{t}=\dr_{0}-t(Ric(\Omega)-(1-\beta)R(||\cdot||))>0$ on $[0,T_{\delta}].$ Now we suppose that for another $\delta'>0$, we have
$\Omega'$ such that $\dr_{t}'=\dr_{0}-t(Ric(\Omega')-(1-\beta)R(||\cdot||))>0.$ Now we assume that $\Omega'=e^{f}\Omega$ where f is a
smooth function on M and $\varphi_{\epsilon}'$ solves the equation $$\dt\varphi_{\epsilon}'=\log\frac{(\dr_{t}'+\ddb\varphi_{\epsilon}')
^{n}(||S||^{2}+\epsilon^{2})^{1-\beta}}{\Omega'},\qquad \varphi_{\epsilon}'(\cdot,0)=0.$$ Now put $\overline{\varphi_{\epsilon}}=
\varphi_{\epsilon}'+tf,$ as we have $Ric(\Omega')=Ric(e^{f}\Omega)=Ric(\Omega)-\ddb f,$ we compute that
\begin{align*}
 \dt\overline{\varphi_{\epsilon}}=&\dt\varphi_{\epsilon}'+f=\log\frac{(\dr_{t}'+\ddb\varphi_{\epsilon}')^{n}(||S||^{2}+\epsilon^{2})^{1-
\beta}}{\Omega'}+f\\=&\log\frac{(\dr_{t}+t\ddb f+\ddb\varphi_{\epsilon}')^{n}(||S||^{2}+\epsilon^{2})^{1-\beta}}{e^{f}\Omega}+f\\=&
\log\frac{(\dr_{t}+\ddb\overline{\varphi_{\epsilon}})^{n}(||S||^{2}+\epsilon^{2})^{1-\beta}}{\Omega},
\end{align*}
and $\overline{\varphi_{\epsilon}}(\cdot,0)=0.$ From this we find that $\overline{\varphi_{\epsilon}}$ just solves the equation
\eqref{eq:conic-ma-appro}. By the uniqueness of the solution for \eqref{eq:conic-ma-appro} we know $\overline{\varphi_{\epsilon}}=
\varphi_{\epsilon},$ which means that for arbitrary $\delta,\delta'$, their corresponding solutions are the same essentially. So we can
glue these solutions together to get a solution for \eqref{eq:conic-ma-appro} on the time interval $[0,T).$\\

    Finally, we want to prove that as $\epsilon$ tends to 0, the solution $\varphi_{\epsilon}$ to \eqref{eq:conic-ma-appro} converges to a
solution of unnormalized conical \ka-Ricci flow in suitable sense. By the argument above, for simplicity we can prove this on some closed
time interval $[0,T_{\delta}].$ We can prove this as \cite{LZ}. Recall that we have uniform bound for $\varphi_{\epsilon},\dot{\varphi_
{\epsilon}}$ and Laplacian estimate $A^{-1}\dr_{0,\epsilon}\leq \dr_{t,\epsilon}+\ddb\varphi_{\epsilon}\leq A\dr_{0,\epsilon},$ For any
compact set $K\subset\subset M\setminus D$ we have a number N which depends on A and K such that $N^{-1}\dr_{0}\leq \dr\leq N\dr_{0},$ and
on this set K $\log(||S||^{2}+\epsilon^{2})$ converges to $log||S||^{2}$, $\dr_{0,\epsilon}$ converges to $\dr^{*}$ in $C^{\infty}_{loc}$
sense. Let K approximate $M\setminus D$ and $\epsilon_{i}$ tend to 0, on time interval $[0,T_{\delta}],$ by diagonal rule we have a
sequence which is denoted by $\varphi_{\epsilon_{i}}(t)$, and converges to a function $\varphi(t)$, which is smooth away from D, in
$C^{\infty}_{loc}$ sense outside D. By \ref{thm-laplacian} we know that $\dr_{\varphi(t)}=\overline{\dr_{t}}+\ddb\varphi(t)$ are conic
 \ka\ metrics with cone angle $2\pi\beta$ along the divisor D on the whole time interval. \\

Now let's show that the limit potential $\varphi(t)$ we defined above satisfies the conical \ka-Ricci flow equation \eqref{eq:conic-ma}
globally on $M\times [0,T_{\delta}]$ in the sense of currents. For any $(n-1,n-1)$-form $\eta$, by the argument above, as $\epsilon_{i}$
tends to 0, we have
\begin{align*}
 \int_{M}\ddb\dt\varphi_{\epsilon_{i}}\wedge\eta&=\int_{M}\log\frac{(\dr_{\epsilon_{i},t}+\ddb\varphi_{\epsilon_{i}})^{n}(||S||^{2}+
\epsilon_{i}^{2})^{1-\beta}}{\Omega}\wedge\ddb\eta\\&\rightarrow\int_{M}\log\frac{(\overline{\dr_{t}}+\ddb\varphi)^{n}||S||^{2(1-\beta)}}
{\Omega}\wedge\ddb\eta\\&=\int_{M}\ddb\log\frac{(\overline{\dr_{t}}+\ddb\varphi)^{n}||S||^{2(1-\beta)}}{\Omega}\wedge\eta.
\end{align*}
       On the other hand, let $K\subset\subset M\setminus D$ and when K approximates $M\setminus D,$ we have $\int_{M\setminus D}\ddb\eta$
tends to 0. As we have $\dot{\varphi}_{\epsilon_{i}},\dot{\varphi}$ are uniformly bounded independent of $\epsilon,$ we see that
$$|\int_{M}(\dot{\varphi}_{\epsilon_{i}}-\dot{\varphi})\wedge\ddb\eta|\leq|\int_{K}(\dot{\varphi}_{\epsilon_{i}}-\dot{\varphi})\wedge\ddb
\eta|+|\int_{M\setminus K}(\dot{\varphi}_{\epsilon_{i}}-\dot{\varphi})\wedge\ddb\eta|.$$
The first integral tends to 0 as $\varphi_{\epsilon_{i}}$ converges to $\varphi$ in the sense of $C^{\infty}(K),$ and the second one tends
to as $\dot{\varphi}_{\epsilon_{i}},\dot{\varphi}$ are uniformly bounded and K approximates $M\setminus D.$ From above we can see that
$\varphi(t)$ satisfies the conical \ka-Ricci flow equation \eqref{eq:conic-ma} globally on $M\times [0,T)$ in the sense of currents.

     To prove the uniqueness we argue as Wang \cite{W}, which comes from Jeffery's argument \cite{Je}. First we need to verify that
$\varphi(t)$ is H\"{o}lder continuous with respect to
$\dr_{0}.$ As \cite{LZ} write $\phi=\varphi+k||S||^{2\beta}$, in any closed time interval, say $[0,T_{\delta}],$ we have uniform bound for
$\phi,\dot{\phi}$. Then we rewrite the equation of conical \ka-Ricci flow \eqref{eq:conic-ma} as $$(\dr_{t}+\ddb\phi)^{n}=e^{\dot{\phi}}
\frac{\Omega}{||S||^{2(1-\beta)}}$$ away from D. As $\beta\in (0,1)$ we find a constant $\eta$ such that $2(1-\beta)(1+\eta)<2$ such that
$$\int_{M}e^{(1+\eta)(\dot{\phi}-\log||S||^{2(1-\beta)})}\Omega\leq C\int_{M}\frac{\Omega}{||S||^{2(1-\beta)(1+\eta)}}\leq C.$$ By
Kolodziej's $L^{p}$ estimate \cite{Ko} we know that $\varphi(t)$ is H\"{o}lder continuous with respect to $\dr_{0}$ on $[0,T_{\delta}],$
actually, on $[0,T).$ Now suppose there are two solutions $\varphi_{1}(t),\varphi_{2}(t)$ which satisfy all the properties above and
solve the conical \ka-Ricci flow on the maximal time interval, then we set $\tilde{\varphi}_{1}=\varphi_{1}(t)+a||S||^{2p}$ and write
$v=\tilde{\varphi}_{1}-\varphi_{2}.$ Compare corresponding conical \ka-Ricci flow equations, we have that
\begin{equation}\label{eq:uniqueness}
\dt v=\dt(\varphi_{1}-\varphi_{2})=\log\frac{(\overline{\dr_{t}}+\ddb\varphi_{1})^{n}}{(\overline{\dr_{t}}+\ddb\varphi_{2})^{n}}
=\underline{\Delta}v-a\underline{\Delta}||S||^{2p},
\end{equation}
where $$\underline{\Delta}:=\int_{0}^{1}(\overline{g}_{t}+\ddb((1-s)\varphi_{1}+s\varphi_{2}))^{i\bar{j}}\partial_{i}\bar{\partial}_{j}ds.$$
Outside the divisor D we can compute that
\begin{align*}
\ddb||S||^{2p}&=p||S||^{2p}\ddb||S||^{2}+\sqrt{-1}p^{2}||S||^{2p-2}\partial||S||^{2}\wedge\bar{\partial}||S||^{2}\\
&\geq -p||S||^{2p}R(||\cdot||).
\end{align*}
By the estimates above, we know that the metric $\overline{g}_{t}+\ddb((1-s)\varphi_{1}+s\varphi_{2})$ is equivalent to the initial conic
metric $\dr^{*}$ and $C^{\alpha}$ outside the divisor D. Then from \eqref{eq:uniqueness} we obtain that
$$\dt v\leq \underline{\Delta}v+aC,$$ by maximal principle, we have $$v(t)\leq\sup v(0)+aCt=a(Ct+||S||^{2p}).$$ Let a tend to 0, we have
$\varphi_{1}\leq\varphi_{2},$ and we can prove $\varphi_{2}\leq\varphi_{1}$ by the same argument. Finally we obtain the uniqueness of
the limit solution.

\subsection{$C^{2,\alpha}$-estimate on the divisor D}
      To complete the proof of Theorem \ref{mainthm1}, we need to give a $C^{2,\alpha}$-estimate for $\varphi(t),$ as this estimate allows
us to apply inverse function theorem to obtain the existence of the solution to conical \ka-Ricci flow \eqref{eq:conic-ma}. In this part,
we will only introduce the definition of these norms under conic setting and leave the proof to the next paper \cite{Sh}
      Let's describe the basic construction. First we can fix a unit polydisk $\D\subset\C$ with the origin as the center, and the divisor
$D=\{z_{1}=0\}.$ Then the standard conic metric attached with $(\C,D)$ is that $$\dr_{\beta}:=\sqrt{-1}\left(\frac{dz_{1}\wedge d\bar
{z}_{1}}{|z_{1}|^{2(1-\beta)}}+\sum_{i=2}^{n}dz_{i}\wedge d\bar{z}_{i}\right),$$ which defines a Riemannian metric $g_{\beta}$ and induces
a distance $d_{\beta}.$ For simplicity, define $\overline{d}(p_{1},p_{2}):=d_{\beta}(x_{1},x_{2})+|t_{1}-t_{2}|^{\frac{1}{2}}$
as the spacetime distance, where $p_{i}=(x_{i},t_{i}).$ Now for a locally integrable function on $\D\times [0,T)$ and $\alpha\in(0,1),$
we define the H\"{o}lder norm as \cite{GP}:
$$[f]_{\alpha}:=\sup_{\D\times [0,T)}|f|+\sup_{p_{1}\neq p_{2}}\frac{|f(p_{1})-f(p_{2})|}{\overline{d}(p_{1},p_{2})^{\alpha}}.$$ We say
 the function f is $C^{\alpha}$ if the norm $[f]_{\alpha}<+\infty.$ Consider
the vector fields $\xi_{1}=|z_{1}|^{1-\beta}\frac{\partial}{\partial z_{1}},\xi_{k}=\frac{\partial}{\partial z_{k}},$ for $k=2,\cdots n,$
then we say a $(1,0)$-form $\tau$ is $C^{\alpha}$ if $\tau(\xi_{i})]$ is $C^{\alpha}$ for any $i=1,\cdots n,$ and say a $(1,1)$-form
$\sigma$ is $C^{\alpha}$ if $\sigma(\xi_{i},\xi_{j})$ is $C^{\alpha}$ for any $i,j=1,\cdots n.$ And we say f is $C^{2,\alpha}$ if $f,
\partial f, \ddb f, \dot{f}$ are all $C^{\alpha}.$
      
      Consider the equation \eqref{eq:conic-ma}, as $\dr_{t}$ is always smooth on the maximal interval, when r is sufficient small we
can write $\overline{\dr}_{t}+\ddb\varphi(t)=\ddb u$ locally, then we only need to show that u is $C^{2,\alpha}.$ By computation before
we know that $Ric(\dr_{beta})=2\pi(1-\beta)[D]-\ddb F$ for some smooth function f, so the equation for u can be written as $$\dt u=\log
\frac{(\ddb u)^{n}}{\dr_{\beta}^{n}}+F.$$ Take the covariant derivatives with respect to $\dr_{\beta},$ we have
$$\dt u_{k\bar{l}}=u^{i\bar{j}}u_{k\bar{l}i\bar{j}}-u^{i\bar{q}}u^{p\bar{j}}u_{i\bar{j}k}u_{p\bar{q}\bar{l}}+F_{k\bar{l}}.$$

       Unfortunally, classic Evans-Krylov-Safanov estimate for nonlinear PDE can't be used in conic setting. In conic \ka-Einstein case,
as the Ricci curvature of target metric has lower bound, by Cheeger-Colding-Tian theory \cite{CCT}, the metric near the conic singularities
is close to a standard flat conic metric, which indicates that perturbed Schauder estimate can be applied. Alternately, Tian extended
his master thesis to conic case and gave a proof of $C^{2,\alpha}$-estimate in case of conic \ka-Einstein metrics. In \cite{Sh}, we will
follow Tian's approach to prove this theorem:

\begin{theorem}\label{thm-regularity}
 u is $C^{2,\alpha}$-bounded on the divisor D.
\end{theorem}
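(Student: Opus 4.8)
The plan is to establish interior $C^{2,\alpha}$ estimates for the parabolic complex Monge-Amp\`ere equation
$$\dt u=\log\frac{(\ddb u)^{n}}{\dr_{\beta}^{n}}+F$$
by adapting Tian's original Schauder-type argument for elliptic conical \ka-Einstein metrics to the parabolic setting. The central idea is that away from $D$ we are in the interior of a smooth parabolic problem where the classical Evans-Krylov-Safonov theory applies directly, so the only real issue is uniform control \emph{up to and across} the divisor $D=\{z_{1}=0\}$, measured in the anisotropic distance $\overline{d}$ and the frame $\xi_{1}=|z_{1}|^{1-\beta}\partial_{z_{1}},\xi_{k}=\partial_{z_{k}}$ introduced above.

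First I would record the a priori bounds we already have: by Theorem \ref{thm-laplacian} the complex Hessian $\ddb u$ is comparable to $\dr_{\beta}$ (equivalently $A^{-1}\dr_{\beta}\leq\ddb u\leq A\dr_{\beta}$), and $|u|,|\dot u|\leq C$ on the relevant slab. This uniform ellipticity, together with the boundedness of $\dot u$, means the equation is uniformly parabolic in the conic sense, so the linearized coefficients $u^{i\bar j}$ are bounded and bounded below. The next step is to freeze coefficients at a point $p_{0}$ on $D$ and compare the solution with the flat conical model: one passes to the quasi-coordinates obtained by the holomorphic branched covering $w_{1}=z_{1}^{\beta}$ (so that $\dr_{\beta}$ pulls back to a smooth Euclidean-type metric), in which the frame $\xi_{i}$ becomes a genuine smooth orthonormal frame and $\overline{d}$ becomes the ordinary parabolic distance. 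In these coordinates the equation becomes a uniformly parabolic Monge-Amp\`ere equation whose coefficients inherit the conic $C^{\alpha}$ regularity of $F$, and the aim is to apply a perturbed parabolic Schauder estimate to the second covariant derivatives $u_{k\bar l}$, whose evolution
$$\dt u_{k\bar l}=u^{i\bar j}u_{k\bar l i\bar j}-u^{i\bar q}u^{p\bar j}u_{i\bar j k}u_{p\bar q\bar l}+F_{k\bar l}$$
is a linear parabolic equation with bounded measurable coefficients. From the Evans-Krylov convexity/concavity argument applied in the quasi-coordinates one first extracts a $C^{\alpha}$ modulus for $\ddb u$ expressed in the $\xi_{i}$ frame; the parabolic H\"older bound on $\dot u$ then follows from the equation itself, and $\partial u$ is handled by interpolation.

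The hard part will be controlling the behavior precisely \emph{along} the divisor, where the quasi-coordinate change $w_{1}=z_{1}^{\beta}$ is only a homeomorphism and the transition functions degenerate: one must check that the Evans-Krylov estimate produced in the downstairs (covering) chart descends to a genuine $\overline{d}$-H\"older bound upstairs, and that the nonlinear term $u^{i\bar q}u^{p\bar j}u_{i\bar j k}u_{p\bar q\bar l}$ together with $F_{k\bar l}$ retains the requisite conic $C^{\alpha}$ regularity rather than blowing up at $\{z_{1}=0\}$. This is exactly the subtlety that forces the approach away from the naive Evans-Krylov-Safonov estimate flagged in the text, and resolving it requires the Cheeger-Colding-Tian near-flatness of the model \cite{CCT} to guarantee that after rescaling the metric is uniformly close to the flat cone, so that the perturbation in the frozen-coefficient Schauder estimate is small. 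Since the detailed verification of these frame-dependent H\"older estimates and the parabolic perturbation argument is lengthy and technical, I would, as the authors indicate, carry out the full computation in the companion paper \cite{Sh} and here only assert the resulting bound, namely that $u$ — and hence $\varphi(t)$ — is $C^{2,\alpha}$ up to $D$ with cone angle $2\pi\beta$.
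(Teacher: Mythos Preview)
Your proposal is correct and matches the paper's own treatment: the paper does not actually prove this theorem in the body but only sketches the intended strategy (perturbed Schauder estimates near the flat conic model, justified via Cheeger--Colding--Tian, following Tian's elliptic argument adapted to the parabolic setting) and explicitly defers the full proof to the companion paper \cite{Sh}. Your outline is in fact more detailed than the paper's sketch---adding the branched-covering quasi-coordinate $w_{1}=z_{1}^{\beta}$ and the evolution equation for $u_{k\bar l}$---but this is a consistent elaboration of the same indicated approach, and you rightly conclude by deferring the technical verification to \cite{Sh} just as the authors do.
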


       This theorem completes the proof for the first main theorem \ref{mainthm1}.

\section{limit behavior near the singular time when the twisted canonical bundle is big}
     In the last section we discussed how long the unnormalized conical \ka-Ricci flow would last. Now we want to understand what will
happen when the flow runs to singular time. During the remaining part of this article  we assume that M is an n-dimensional projective
manifold. In \cite{TZ} \cite{ST3}, to study minimal model program, they mainly considered the case that the canonical line bundle $K_{M}$
is big, or big and nef (numerical effective). First let's recall their definitions:
\begin{definition}
 Suppose L is a holomorphic line bundle on a projective manifold M, then we say L is big if $[c_{1}(L)]^{n}=\int_{M}c_{1}(L)^n>0.$ And
we say L is nef if for any algebraic curve C on M, $c_{1}(L)(C)=\int_{C}c_{1}(L)\geq 0.$
\end{definition}
To study the limit behavior near the singular time, they mainly made use of so-called Kodaira's lemma, which was proved by Kawamata and
used in degenerate Monge-Ampere equations by Tsuji \cite{Ts} first:
\begin{lemma}\label{kodaira}
 Let L be a line bundle on a projective manifold M. If L is big, then there exists an effective line bundle E and two positive numbers
$a,b$ such that $L-\delta E$ is positive for any $\delta\in (a,b).$ Moreover, if L is big and nef, then the conclusion above is true
for $\delta\in (0,b).$
\end{lemma}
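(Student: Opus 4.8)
The plan is to reduce everything to the classical effective decomposition underlying Kodaira's lemma and then read off both assertions by elementary manipulation of cohomology classes. First I would fix an ample line bundle $H$ on $M$ together with a smooth divisor $H'\in|H|$. Bigness of $L$ means that $h^{0}(M,mL)$ grows like $m^{n}$ as $m\to\infty$, so from the restriction sequence
\[
0\to\mathcal{O}_{M}(mL-H)\to\mathcal{O}_{M}(mL)\to\mathcal{O}_{H'}(mL)\to 0
\]
one gets $h^{0}(mL-H)\geq h^{0}(mL)-h^{0}(H',mL|_{H'})$, where the subtracted term is controlled by sections on the $(n-1)$-dimensional variety $H'$ and hence grows at most like $m^{n-1}$. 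Therefore $h^{0}(mL-H)>0$ for $m$ large, so $mL-H$ is linearly equivalent to an effective divisor; call the associated effective line bundle $E$. This produces the decomposition $mL=H+E$, equivalently $E=mL-H$.

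Granting this, I would rewrite the class to be tested as
\[
L-\delta E=L-\delta(mL-H)=(1-m\delta)\,L+\delta H,
\]
an identity of real $(1,1)$-classes valid for every $\delta$. For the bigness statement, evaluating at $\delta=1/m$ gives $L-\frac{1}{m}E=\frac{1}{m}H$, which is ample, i.e.\ a positive (\ka) class. Since the \ka\ cone is open in $H^{1,1}(M,\R)$ and $\delta\mapsto[L-\delta E]$ is affine, the set of $\delta$ for which $L-\delta E$ is positive is open and contains the point $1/m>0$; choosing $0<a<1/m<b$ inside this set yields the required interval $(a,b)$.

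For the big and nef case I would exploit the extra positivity directly in the same identity. When $0<\delta<1/m$ the coefficient $1-m\delta$ is strictly positive, so $(1-m\delta)L$ is again nef, while $\delta H$ is ample; since the sum of a nef class and an ample class is ample (Kleiman's criterion), $L-\delta E$ is positive for every $\delta\in(0,1/m)$, and we may take $b=1/m$. The one genuinely substantive step is the first one: turning the numerical hypothesis of bigness into the honest effective decomposition $mL=H+E$. Everything after that is formal, so the expected obstacle is ensuring that the growth $h^{0}(mL)\sim c\,m^{n}$ with $c>0$ really follows from the definition of bigness and that the restriction term is of strictly lower order; for $L$ nef this is immediate since then $\mathrm{vol}(L)=\int_{M}c_{1}(L)^{n}$, while in general it is exactly the content of the asymptotic Riemann--Roch estimate that makes Kodaira's lemma work.
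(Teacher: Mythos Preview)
The paper does not prove this lemma at all: it is stated as a quotation of a classical result, attributed to Kawamata and to its use by Tsuji, and the text immediately moves on to applying it. So there is no ``paper's own proof'' to compare against.

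Your argument is the standard proof of Kodaira's lemma and is correct. The restriction sequence produces an effective $E$ with $mL=H+E$ for some ample $H$ and large $m$; the identity $L-\delta E=(1-m\delta)L+\delta H$ then gives ampleness at $\delta=1/m$, hence on an open interval by openness of the \ka\ cone, and in the nef case the whole interval $(0,1/m)$ works because nef plus ample is ample. Two small remarks. First, to guarantee a smooth $H'\in|H|$ you should take $H$ very ample (or pass to a multiple), which costs nothing. Second, and more substantively, the paper's Definition~3.1 takes ``big'' to mean $\int_{M}c_{1}(L)^{n}>0$, which is not the usual section-growth definition your argument actually uses; you flag this in your last paragraph, and in the situations the paper cares about (notably the big and nef case, where $\mathrm{vol}(L)=c_{1}(L)^{n}$) the two notions coincide, so the discrepancy does not affect the applications downstream.
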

Now we can begin the proof of theorem \ref{mainthm2}. As the last section, we can do similar constructions and consider the approximation
flow equation \eqref{eq:conic-ma-appro} again that
$$\dt\varphi_{\epsilon}=\log\frac{(\dr_{t,\epsilon}+\ddb\varphi_{\epsilon})^{n}}{\Omega}+\log(||S||^{2}+\epsilon^{2})^{1-\beta}.$$
Although $\dr_{t,\epsilon}$ may not be a \ka\ metric near the singular time T, as it's controlled from above, we can still make use of
maximal principle to get a uniformly upper bound for $\varphi_{\epsilon}.$ To get a lower bound, we need to apply Kodaira's lemma to
get a family of regular background metrics. Recall that we define $\dr_{t}=\dr_{0}-t(Ric(\Omega)-(1-\beta)R(||\cdot||)),$ and we assume
that the twisted canonical bundle $K_{M}+(1-\beta)[D]$ is big, then we can easily see that the current $[\dr_{t}]$ is big on the time
interval $[0,T_{0}].$ By Kodaira's lemma, we have a $\delta\in (a,b)$ and an effective divisor E with a Hermitian metric $||\cdot||'$ such
that $\dr_{t}+\delta\ddb\log||\cdot||'^{2}>0.$ If we define $S'$ as a local holomorphic section for E, we will have
$$\dr_{t}+\delta\ddb\log||S'||'^{2}>0.$$

Set $\dr'_{t,\epsilon}=\dr_{t,\epsilon}+\delta\ddb\log||S'||'^{2},$ we can rewrite the equation \eqref{eq:conic-ma-appro} as
\begin{equation}\label{eq:big-app}
 \dt(\varphi_{\epsilon}-\delta\log||S'||'^{2})=\log\frac{(\dr'_{t,\epsilon}+\ddb(\varphi_{\epsilon}-\delta\log||S'||'^{2}))^{n}
(||S||^{2}+\epsilon^{2})^{1-\beta}}{\Omega}.
\end{equation}
Note that in this equation, $\dr'_{t,\epsilon}$ is a smooth metric, by the argument in the last section, we know that
$$\log\frac{\dr'^{n}_{t,\epsilon}(||S||^{2}+\epsilon^{2})^{1-\beta}}{\Omega}$$ is bounded from below by a constant depending on $\delta.$
By maximal principle, we know that $\varphi_{\epsilon}-\delta\log||S'||'^{2}$ is bounded from below by $C_{\delta}.$ Now we get a
$C^{0}$-estimate for $\varphi_{\epsilon}$ that $$C_{\delta}+\delta\log||S'||'^{2}\leq\varphi_{\epsilon}\leq C.$$
Now let's begin to estimate $\dot{\varphi}_{\epsilon}.$ For simplicity, we denote $\varphi_{\epsilon,\delta}=\varphi_{\epsilon}-\delta
\log||S'||'^{2},$ and $\dr=\dr'_{t,\epsilon}+\ddb\varphi_{\epsilon,\delta}.$ Then the equation \eqref{eq:big-app} can be written as
$$\dt\varphi_{\epsilon,\delta}=\log\frac{\dr^{n}(||S||^{2}+\epsilon^{2})^{1-\beta}}{\Omega}.$$ Take the derivative of this equation, we
obtain that $$\dt\dot{\varphi}_{\epsilon,\delta}=\Delta\dot{\varphi}_{\epsilon,\delta}+tr_{\dr}\rho,$$ which is the same with
\eqref{eq:derivative-conic-ma}. Note that $\ddb\log||S'||'^{2}=-R(||\cdot||')$ outside the base divisor E, and $C_{\delta}\leq
\varphi_{\epsilon,\delta}\leq C-\delta\log||S'||'^{2},$ we can compute as \cite{ST3},
\begin{align*}
 &(\dt-\Delta)(\dot{\varphi}_{\epsilon,\delta}-A^{2}\varphi_{\epsilon,\delta}+A\log||S'||'^{2})\\=&tr_{\dr}\rho-A^{2}\dot{\varphi}_
{\epsilon,\delta}+A^{2}(n-tr_{\dr}\dr'_{t,\epsilon})+Atr_{\dr}R(||\cdot||')\\=&tr_{\dr}(\rho-A^{2}\dr'_{t,\epsilon}+AR(||\cdot||'))-A^{2}
(\dot{\varphi}_{\epsilon,\delta}-A^{2}\varphi_{\epsilon,\delta}+A\log||S'||'^{2})\\&+(nA^{2}-A^{4}\varphi_{\epsilon,\delta}+A^{3}
\log||S'||'^{2}).
\end{align*}
Choose suitable large constant A such that the first term is negative, make use of $C^{0}$-estimate for $\varphi_{\epsilon,\delta}$ and
$\log||S'||'^{2}$ is bounded from above, we have that
\begin{align*}
(\dt-\Delta)(\dot{\varphi}_{\epsilon,\delta}-A^{2}\varphi_{\epsilon,\delta}+A\log||S'||'^{2})&\leq -A^{2}(\dot{\varphi}_{\epsilon,
\delta}-A^{2}\varphi_{\epsilon,\delta}+A\log||S'||'^{2})\\&+C-C'\log||S'||'^{2}.
\end{align*}
Now make use of maximal principle, note that the maximal of $\dot{\varphi}_{\epsilon,\delta}-A^{2}\varphi_{\epsilon,\delta}+A
\log||S'||'^{2}$ can only be obtained outside E, we can finally get that
$$\dot{\varphi}_{\epsilon,\delta}\leq C-C'\log||S'||'^{2},$$ where the constants $C,C'$ may depend on $\delta.$
Similarly, to get a lower bound, we can compute that
\begin{align*}
 &(\dt-\Delta)(\dot{\varphi}_{\epsilon,\delta}+A^{2}\varphi_{\epsilon,\delta}-A\log||S'||'^{2})\\=&tr_{\dr}\rho+A^{2}\log\frac{\dr^{n}
(||S||^{2}+\epsilon^{2})^{1-\beta}}{\Omega}-nA^{2}+A^{2}tr_{\dr}\dr'_{t,\epsilon}-Atr_{\dr}R(||\cdot||')\\=&tr_{\dr}(\rho+\frac{A^{2}}{2}
\dr'_{t,\epsilon}-AR(||\cdot||'))-nA^{2}\\&+A^{2}(\log\frac{\dr^{n}}{\dr'^{n}_{t,\epsilon}}+\frac{tr_{\dr}\dr'_{t,\epsilon}}{2}
+\log\frac{\dr'^{n}_{t,\epsilon}(||S||^{2}+\epsilon^{2})^{1-\beta}}{\Omega}).
\end{align*}
Choose A large enough such that the first term becomes positive, then make use of Schwarz inequality and the property of logarithmic
functions, we obtain that
\begin{align*}
 &(\dt-\Delta)(\dot{\varphi}_{\epsilon,\delta}+A^{2}\varphi_{\epsilon,\delta}-A\log||S'||'^{2})\\ \geq& -nA^{2}+A^{2}(\log\frac{\dr^{n}}
{\dr'^{n}_{t,\epsilon}}+\frac{1}{2}(\frac{\dr'^{n}_{t,\epsilon}}{\dr^{n}})^{\frac{1}{n}}+\log\frac
{\dr'^{n}_{t,\epsilon}(||S||^{2}+\epsilon^{2})^{1-\beta}}{\Omega})\\ \geq&A^{2}(-\log\frac{\dr^{n}}{\dr'^{n}_{t,\epsilon}}+\log\frac
{\dr'^{n}_{t,\epsilon}(||S||^{2}+\epsilon^{2})^{1-\beta}}{\Omega})-C_{1}\\=&-A^{2}(\dot{\varphi}_{\epsilon,\delta}+A^{2}\varphi_{
\epsilon,\delta}-A\log||S'||'^{2})+2A^{2}\log\frac{\dr'^{n}_{t,\epsilon}(||S||^{2}+\epsilon^{2})^{1-\beta}}{\Omega}\\&+A^{4}\varphi_{
\epsilon,\delta}-A^{3}\log||S'||'^{2}-C_{1}\\ \geq&-A^{2}(\dot{\varphi}_{\epsilon,\delta}+A^{2}\varphi_{\epsilon,\delta}
-A\log||S'||'^{2})-C_{2},
\end{align*}
By maximal principle and argue as above, we can get a lower bound for $\dot{\varphi}_{\epsilon,\delta}$ that
$$\dot{\varphi}_{\epsilon,\delta}\leq -C+C'\log||S'||'^{2}.$$ Now we can summarize these estimates and obtain such a theorem:

\begin{theorem}\label{thm-big-c0}
Suppose the twisted canonical bundle $K_{M}+(1-\beta)[D]$ is big on a projective manifold M, and E is an effective divisor
such that $[\dr_{t}]-\delta[E]=[\dr_{0}]-t[c_{1}(M)-(1-\beta)c_{1}(D)]-\delta[E]>0$ on the time interval $[0,T_{0}]$ for a positive
$\delta\in(a,b).$ Let $\varphi_{\epsilon}$ solves the approximation flow equation \eqref{eq:conic-ma-appro}, then we have that
$$C_{\delta}+\delta\log||S'||'^{2}\leq\varphi_{\epsilon}\leq C,\quad |\dot{\varphi}_{\epsilon}|\leq C-C'\log||S'||'^{2}.$$
\end{theorem}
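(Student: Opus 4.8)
The plan is to assemble the four bounds from a short sequence of maximum-principle arguments, the key device being Kodaira's lemma \ref{kodaira}, which lets us trade the degenerate background metric $\dr_{t,\epsilon}$ for a genuinely positive one after passing to the corrected potential $\varphi_{\epsilon,\delta}=\varphi_{\epsilon}-\delta\log||S'||'^{2}$. For the upper bound on $\varphi_{\epsilon}$ no correction is needed: although near the singular time $[\dr_{t}]$ is only big, so that $\dr_{t,\epsilon}$ need not be \ka, it remains uniformly bounded from above, hence so is $\dr_{t,\epsilon}^{n}(||S||^{2}+\epsilon^{2})^{1-\beta}/\Omega$ independently of $\epsilon$; feeding this into the maximum principle for $\dt\sup\varphi_{\epsilon}$ via \eqref{eq:conic-ma-appro} gives $\varphi_{\epsilon}\le C$.

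The lower bound is where positivity must be restored by hand. Bigness of $K_{M}+(1-\beta)[D]$ makes $[\dr_{t}]$ big on $[0,T_{0}]$, so Kodaira's lemma supplies $\delta\in(a,b)$, an effective divisor $E$, and a Hermitian metric $||\cdot||'$ with section $S'$ such that $\dr_{t}+\delta\ddb\log||S'||'^{2}>0$. With $\dr'_{t,\epsilon}=\dr_{t,\epsilon}+\delta\ddb\log||S'||'^{2}$, equation \eqref{eq:big-app} now carries a smooth positive background, and $\log(\dr'^{n}_{t,\epsilon}(||S||^{2}+\epsilon^{2})^{1-\beta}/\Omega)$ is bounded below by a constant $C_{\delta}$; the maximum principle then gives $\varphi_{\epsilon,\delta}\ge C_{\delta}$, i.e.\ $C_{\delta}+\delta\log||S'||'^{2}\le\varphi_{\epsilon}$.

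For $\dot{\varphi}_{\epsilon}$ I would differentiate \eqref{eq:big-app} to obtain $\dt\dot{\varphi}_{\epsilon,\delta}=\Delta\dot{\varphi}_{\epsilon,\delta}+tr_{\dr}\rho$, matching \eqref{eq:derivative-conic-ma}, and test against the pair $\dot{\varphi}_{\epsilon,\delta}\mp A^{2}\varphi_{\epsilon,\delta}\pm A\log||S'||'^{2}$. Taking $A$ large forces the trace term $tr_{\dr}(\rho\mp A^{2}\dr'_{t,\epsilon}\pm AR(||\cdot||'))$ to have a fixed sign, and combining this with the Schwarz inequality and the $C^{0}$ bounds above yields differential inequalities of the type $(\dt-\Delta)H\le-A^{2}H+C-C'\log||S'||'^{2}$ together with its lower companion. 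The crucial observation is that $\log||S'||'^{2}\to-\infty$ along $E$, so the extrema of these test functions must occur away from $E$; the maximum principle applies there and produces $|\dot{\varphi}_{\epsilon}|\le C-C'\log||S'||'^{2}$.

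The main obstacle is precisely this degeneration of the background class at $t=T_{0}$: once $[\dr_{T_{0}}]$ sits on the boundary of the \ka\ cone, no positive reference metric exists, so every estimate has to be run relative to the Kodaira-corrected $\dr'_{t,\epsilon}$, at the price of bounds that blow up logarithmically along $E$. Carefully tracking where extrema can occur, and verifying that the unfavorable $\log||S'||'^{2}$ terms always drive them off $E$, is the technical heart of the argument.
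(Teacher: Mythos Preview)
Your proposal is correct and follows essentially the same approach as the paper: the uniform upper bound on $\varphi_{\epsilon}$ directly from the maximum principle, the Kodaira-corrected potential $\varphi_{\epsilon,\delta}$ for the lower bound, and the same pair of test functions $\dot{\varphi}_{\epsilon,\delta}\mp A^{2}\varphi_{\epsilon,\delta}\pm A\log||S'||'^{2}$ for the time-derivative estimates, with the extrema forced off $E$ by the blow-up of $\log||S'||'^{2}$. The only cosmetic difference is that the paper invokes the Schwarz inequality and the behavior of the logarithm explicitly only in the lower-bound computation for $\dot{\varphi}_{\epsilon}$, while the upper bound goes through by a direct sign choice on the trace term together with the $C^{0}$-estimate.
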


      Now we can derive a Laplacian estimate as before, which is also an application of generalized Schwarz Lemma.
\begin{theorem}\label{thm-big-c2}
 On the time interval $[0,T_{0}]$ there exist constants $C,\alpha>0$ such that $tr_{\dr_{0,\epsilon}}\dr\leq C||S'||'^{-2\alpha}.$
\end{theorem}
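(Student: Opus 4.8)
The plan is to run the same parabolic Schwarz lemma computation as in Theorem \ref{thm-laplacian}, but now with the Kodaira-shifted background $\dr'_{t,\epsilon}=\dr_{t,\epsilon}+\delta\ddb\log||S'||'^{2}$ in place of $\dr_{t,\epsilon}$, and to insert an auxiliary term that both pushes the maximum off the base locus $E$ and absorbs the logarithmic degeneration of the zeroth- and first-order estimates from Theorem \ref{thm-big-c0}. Since the divisor $D$ and its section $S$ are untouched by the big-case modifications, the conic part of the computation carries over verbatim: writing $\dr=\dr'_{t,\epsilon}+\ddb\varphi_{\epsilon,\delta}$ with $\varphi_{\epsilon,\delta}=\varphi_{\epsilon}-\delta\log||S'||'^{2}$, I would first obtain, away from $D\cup E$,
$$(\dt-\Delta)\log tr_{\dr_{0,\epsilon}}\dr\leq C\,tr_{\dr}\dr_{0,\epsilon}+C'\Delta\chi_{\rho}+C,$$
exactly as before, the bisectional curvature of $\dr_{0,\epsilon}$ being controlled by $\chi_{\rho}$.

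Next I would study the auxiliary quantity
$$u=\log tr_{\dr_{0,\epsilon}}\dr+C'\chi_{\rho}-B\varphi_{\epsilon,\delta}+\alpha\log||S'||'^{2}.$$
The point of working with $\varphi_{\epsilon,\delta}$ is that $\ddb\varphi_{\epsilon,\delta}=\dr-\dr'_{t,\epsilon}$, so $(\dt-\Delta)(-B\varphi_{\epsilon,\delta})=-B\dot{\varphi}_{\epsilon,\delta}+Bn-B\,tr_{\dr}\dr'_{t,\epsilon}$, and the positivity $\dr'_{t,\epsilon}\geq c\,\dr_{0,\epsilon}$ coming from Kodaira's lemma lets me pick $B$ so large that the term $-B\,tr_{\dr}\dr'_{t,\epsilon}$ dominates the bad term $C\,tr_{\dr}\dr_{0,\epsilon}$ (the contribution $\alpha\,tr_{\dr}R(||\cdot||')$ from the last term being a bounded form, hence also absorbed). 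Crucially, for each fixed $\epsilon>0$ the potential $\varphi_{\epsilon}$ is smooth, so $\varphi_{\epsilon,\delta}\to+\infty$ and $\alpha\log||S'||'^{2}\to-\infty$ along $E$; thus $u\to-\infty$ along $E$ and its space-time maximum is attained at some $(p,s)$ with $p\in M\setminus E$, where the computation above is legitimate.

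At this maximum the parabolic maximum principle and the cancellation above give $tr_{\dr}\dr_{0,\epsilon}(p)\leq C-B\dot{\varphi}_{\epsilon,\delta}(p)$; converting through $tr_{\dr_{0,\epsilon}}\dr\leq\frac{1}{(n-1)!}\frac{\dr^{n}}{\dr_{0,\epsilon}^{n}}(tr_{\dr}\dr_{0,\epsilon})^{n-1}$ and using the flow equation \eqref{eq:big-app} together with the bounds $|\dot{\varphi}_{\epsilon}|\leq C-C'\log||S'||'^{2}$ and $\varphi_{\epsilon,\delta}\geq C_{\delta}$ of Theorem \ref{thm-big-c0}, I expect $\log tr_{\dr_{0,\epsilon}}\dr(p)\leq C-C''\log||S'(p)||'^{2}$ for a constant $C''$ independent of $\epsilon$. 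Choosing $\alpha\geq C''$ then makes $u(p)\leq C+(\alpha-C'')\log||S'(p)||'^{2}\leq C$ uniformly, so $u\leq C$ everywhere. Exponentiating and using $\chi_{\rho}\geq0$ together with $\varphi_{\epsilon,\delta}\leq C-\delta\log||S'||'^{2}$ yields
$$\log tr_{\dr_{0,\epsilon}}\dr\leq C-(\alpha+B\delta)\log||S'||'^{2},$$
that is, $tr_{\dr_{0,\epsilon}}\dr\leq C||S'||'^{-2\alpha'}$ with $\alpha'=\alpha+B\delta$, as claimed.

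The main obstacle is precisely the bookkeeping at the interior maximum. Unlike in Theorem \ref{thm-laplacian}, the $C^{0}$ and $\dot{\varphi}$ estimates of Theorem \ref{thm-big-c0} degenerate logarithmically along $E$, so a naive repetition of the closed-manifold argument would leave a term $-C''\log||S'(p)||'^{2}$ in the bound for $u(p)$ that does not vanish as $p\to E$. The role of the extra term $\alpha\log||S'||'^{2}$ is exactly to soak this up, which is why $\alpha$ must be chosen \emph{after} the maximum-principle estimate and large enough to beat the logarithmic rate $C''$; simultaneously one must keep $B$ tied to the uniform positivity constant $c$ of the Kodaira background $\dr'_{t,\epsilon}$, and verify that all of these constants, and hence the final exponent $\alpha'$, can be taken independent of $\epsilon$ on $[0,T_{0}]$. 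Everything else is the routine conic Schwarz-lemma calculation already carried out in Theorem \ref{thm-laplacian}.
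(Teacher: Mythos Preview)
Your proposal is correct and follows essentially the same route as the paper: the paper also applies the parabolic Schwarz inequality from Theorem~\ref{thm-laplacian} to $\log tr_{\dr_{0,\epsilon}}\dr+C'\chi_{\rho}$, subtracts a large multiple of $\varphi_{\epsilon,\delta}$ and adds a smaller multiple of $\log||S'||'^{2}$ (using the specific pairing $-A^{2}\varphi_{\epsilon,\delta}+A\log||S'||'^{2}$ in place of your $-B\varphi_{\epsilon,\delta}+\alpha\log||S'||'^{2}$), then runs the maximum principle off $E$ and converts via the volume ratio. The only difference is organizational: the paper's quadratic/linear pairing $A^{2},A$ makes the absorption of $AR(||\cdot||')$ by $A^{2}\dr'_{t,\epsilon}$ automatic for large $A$, whereas you decouple $B$ and $\alpha$ and fix $\alpha$ last---both choices work and yield the same estimate.
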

\begin{proof}
 As theorem \ref{thm-laplacian}, we first compute that $$(\dt-\Delta)(\log\,tr_{\dr_{0,\epsilon}}\dr+C'\chi_{\rho})\leq Ctr_{\dr}
{\dr_{0,\epsilon}}.$$ As in the proof of theorem \ref{thm-big-c0}, we define
$$H=\log\,tr_{\dr_{0,\epsilon}}\dr+C'\chi_{\rho}-A^{2}\varphi_{\epsilon,\delta}+A\log||S'||'^{2},$$
then for sufficiently large $A>0,$ we have that
\begin{align*}
 &(\dt-\Delta)H\\ \leq& tr_{\dr}(C\dr_{0,\epsilon}-A^{2}\dr'_{t,\epsilon}+AR(||\cdot||'))-A^{2}\dot{\varphi}_{\epsilon,\delta}
+nA^{2}\\ \leq& -tr_{\dr}\dr_{0,\epsilon}+C-C'\log||S'||'^{2}.
\end{align*}
By maximal principle, we know that the maximal of H can only be obtained outside the divisor E, and when this maximal is obtained,
we have $$tr_{\dr}\dr_{0,\epsilon}\leq C-C'\log||S'||'^{2},$$ which indicates that at the maximal of H,
$$tr_{\dr_{0,\epsilon}}\dr\leq\frac{\dr^{n}}{\dr^{n}_{0,\epsilon}}(tr_{\dr}{\dr_{0,\epsilon}})^{n-1}=e^{\dot{\varphi}_{\epsilon}}
\frac{\Omega}{(||S||^{2}+\epsilon^{2})^{1-\beta}\dr_{0,\epsilon}^{n}}\leq C||S'||'^{-2\alpha'}.$$
Then the theorem follows.
\end{proof}
As in the last section, we can give local high order estimate for $\varphi_{\epsilon}$ outside the divisors D and E.
\begin{proposition}\label{prop-big-high}
 For any $K\subset\subset M\setminus(D\bigcup E)$ and $k>0,$ there exists $C_{k,K}$ such that
$$||\varphi_{\epsilon}||_{C^{k}(K\times [0,T_{0}])}\leq C_{k,K}.$$
\end{proposition}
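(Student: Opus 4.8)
The plan is to follow the argument given at the end of Section 2 verbatim, now localized to $K$. The point is that the Laplacian estimate of Theorem \ref{thm-big-c2} degenerates only along $E$, while the twisting form $\theta=(1-\beta)\ddb\log\frac{||S||^{2}+\epsilon^{2}}{||\cdot||^{2}}$ of the generalized \ka-Ricci flow \eqref{eq:gkrf} degenerates only along $D$; on a compact set $K$ disjoint from both $D$ and $E$ each of these quantities becomes uniformly controlled in $\epsilon$, so that we recover a two-sided metric equivalence on $K$ and may apply the local high order estimate of Theorem \ref{thm-highorder} followed by a standard parabolic bootstrap.

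First I would upgrade Theorem \ref{thm-big-c2} to a genuine two-sided bound on $K$. Since $K\subset\subset M\setminus E$, the norm $||S'||'$ is bounded below away from zero on $K$, so Theorem \ref{thm-big-c2} yields $tr_{\dr_{0,\epsilon}}\dr\leq C_{K}$, that is $\dr\leq C_{K}\dr_{0,\epsilon}$. For the reverse inequality I use the elementary relation
$$tr_{\dr}\dr_{0,\epsilon}\leq\frac{\dr^{n}}{\dr_{0,\epsilon}^{n}}(tr_{\dr_{0,\epsilon}}\dr)^{n-1}$$
together with the determinant identity
$$\frac{\dr^{n}}{\dr_{0,\epsilon}^{n}}=e^{\dot{\varphi}_{\epsilon}}\frac{\Omega}{(||S||^{2}+\epsilon^{2})^{1-\beta}\dr_{0,\epsilon}^{n}}.$$
On $K\subset\subset M\setminus D$ the factor $(||S||^{2}+\epsilon^{2})^{1-\beta}$ is bounded above and below uniformly in $\epsilon$, and $\dot{\varphi}_{\epsilon}$ is uniformly bounded on $K$ by Theorem \ref{thm-big-c0} (the bound $|\dot{\varphi}_{\epsilon}|\leq C-C'\log||S'||'^{2}$ is finite there since $||S'||'$ is bounded below on $K$); hence $\dr^{n}/\dr_{0,\epsilon}^{n}$ is comparable to a constant, and combined with the upper bound for $tr_{\dr_{0,\epsilon}}\dr$ this gives $tr_{\dr}\dr_{0,\epsilon}\leq C_{K}$, i.e. $\dr\geq C_{K}^{-1}\dr_{0,\epsilon}$. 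Since away from $D$ the metrics $\dr_{0},\dr_{0,\epsilon},\dr_{t,\epsilon}$ are uniformly equivalent with uniformly bounded derivatives, I obtain $A^{-1}\dr_{0}\leq\dr\leq A\dr_{0}$ on $K'\times[0,T_{0}]$ for a slightly larger $K\subset\subset K'\subset\subset M\setminus(D\cup E)$ and a constant $A=A_{K'}$ independent of $\epsilon$.

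With this two-sided bound in hand I would apply Theorem \ref{thm-highorder} to the generalized \ka-Ricci flow \eqref{eq:gkrf} on $K'$. The hypotheses are met: the twisting form $\theta$ is smooth with all derivatives uniformly bounded on $K'$ because $K'$ avoids $D$, and $\varphi_{\epsilon}(\cdot,0)$ is uniformly bounded in $C^{\infty}(K')$ since $\dr_{0,\epsilon}\to\dr^{*}$ in $C^{\infty}_{loc}(M\setminus D)$; all constants appearing in Theorem \ref{thm-highorder} are therefore uniform in $\epsilon$. This yields uniform bounds for $S=|\nabla_{0}g|_{\dr}^{2}$ and $|Rm(\dr)|^{2}$ on $K$, which together with the $C^{2}$-estimate give a uniform $C^{2,\alpha}$-bound for $\varphi_{\epsilon}$ on $K\times[0,T_{0}]$. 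Differentiating the Monge--Amp\`ere flow \eqref{eq:conic-ma-appro} and applying interior Schauder estimates for the now uniformly elliptic operator then bootstraps this to the asserted $C^{k}$-bounds for every $k$.

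I do not expect a genuinely new obstacle here, since every ingredient has already been prepared; the whole content is the observation that the $E$-degenerate Laplacian bound and the $D$-degenerate twisting term are both harmless once restricted to $K$. The only point demanding care is the uniformity of the constants in $\epsilon$—in particular that the lower metric bound persists as $\epsilon\to 0$—and this follows from the $\epsilon$-uniform control of $\dot{\varphi}_{\epsilon}$ on $K$ supplied by Theorem \ref{thm-big-c0} together with the $C^{\infty}_{loc}$ convergence of $\dr_{0,\epsilon}$ away from $D$.
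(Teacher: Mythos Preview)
Your approach is exactly what the paper intends: the proposition is stated without proof and simply refers back to the local high order estimate of Theorem~\ref{thm-highorder}, applied after observing that on $K\subset\subset M\setminus(D\cup E)$ both the Laplacian bound of Theorem~\ref{thm-big-c2} and the twisting form $\theta$ become uniformly controlled.

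One small correction: the elementary inequality you invoke for the reverse metric bound is mis-stated. What you need (and what the paper uses in the proof of Theorem~\ref{thm-laplacian}) is
\[
tr_{\dr}\dr_{0,\epsilon}\leq\frac{\dr_{0,\epsilon}^{n}}{\dr^{n}}\,(tr_{\dr_{0,\epsilon}}\dr)^{n-1},
\]
with the volume ratio inverted relative to what you wrote. Since you correctly argue that $\dr^{n}/\dr_{0,\epsilon}^{n}$ is bounded above and below on $K$ (via the $\dot{\varphi}_{\epsilon}$ bound of Theorem~\ref{thm-big-c0} and the fact that $||S||$, $||S'||'$ are bounded away from zero there), the conclusion $tr_{\dr}\dr_{0,\epsilon}\leq C_{K}$ still follows and the rest of your argument goes through unchanged.
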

       By this proposition, we can argue as the last section. We put a sequence of compact set to approximate the regular part $M
\setminus(D\bigcup E).$ On each compact set K, as we have uniform estimates for high order derivatives, we can choose a sequence
$\epsilon_{i}$ such that $\varphi_{\epsilon_{i}}$ converges on $K\times [0,T_{0}]$ in $C_{loc}^{\infty}$ sense. By diagonal argument,
we can take a sequence, say, $\varphi_{\epsilon_{i}}$ converges to a function $\varphi$ on $(M\setminus(D\bigcup E))\times [0,T_{0}]$
$C_{loc}^{\infty}$ sense, which is smooth on $M\setminus(D\bigcup E).$ By theorem \ref{thm-big-c2} we know that
$$\dr_{\varphi}=\dr_{t,\epsilon}+\ddb\varphi$$ is a conic \ka\ metric with angle $2\pi\beta$ along the part of divisor D outside the
divisor E on $[0,T_{0}].$ Now as above, we want to prove that $\varphi_{\epsilon_{i}}$ converges to $\varphi$ globally in the sense of
currents. Other arguments are the same except for the part near the divisor E. Note that when we consider the integral
$$\int_{M}\log\frac{(\overline{\dr_{t}}+\ddb\varphi)^{n}||S||^{2(1-\beta)}}{\Omega}\wedge\ddb\eta,$$ near the divisor E, by theorem
\ref{thm-big-c2}, the integral function grows as $\log||S||',$ so this integral exists. On the other hand, by theorem \ref{thm-big-c2},
as $\dot{\varphi},\dot{\varphi}_{\epsilon_{i}}$ grows like $\log||S||',$ then it's also true that
$$|\int_{M}(\dot{\varphi}_{\epsilon_{i}}-\dot{\varphi})\wedge\ddb\eta|$$ tends to 0. So we know that the limit $\varphi(t)$ outside the
divisor E satisfies conical \ka-Ricci flow equation on $[0,T_{0}]$ in the sense of currents, and as t tends to $T_{0},$ $\dr_{\varphi}=
\overline{\dr_{t}}+\ddb\varphi(t)$ converges smoothly to a \ka\ metric on $M\setminus(D\bigcup E),$ and conic along $D\setminus E$ with
cone angle $2\pi\beta.$ Now we want to claim that the limit is unique, i.e. for any sequence $\{\epsilon_{i}\}$ which tends to 0, the
sequence $\varphi_{\epsilon_{i}}$ will converge to the same limit in our sense. Actually this follows from the uniqueness of the
solution on the time interval $[0,T_{0}).$ Then consider that in any compact set in $M\setminus(D\bigcup E),$ as $\dot{\varphi}_{
\epsilon_{i}}$ is uniformly bounded, we can conclude that the limit of any sequence $\varphi_{\epsilon_{i}}$ converges to the same
limit, say, $\varphi(t)$ at $T_{0},$ which completes our claim. Finally, note that the base locus E may not be unique, however, we can
consider the intersection of these E's and call it the stable base locus, and still denote it as E. By the argument above, we can
obtain a unique limit current $\dr_{\varphi}=\dr_{t,\epsilon}+\ddb\varphi$ outside E at time $T_{0},$ and this current is a smooth
\ka\ metric outside $D\bigcup E,$ and conic along $D\setminus E$ with cone angle $2\pi\beta.$ Finally, $C^{2,\alpha}$-estimate in the
proof of theorem \ref{mainthm1} still holds here. Now we complete the proof of theorem \ref{mainthm2}.

\section{limit in the case of big and nef twisted canonical bundle}

     In this section we want to understand the limit behavior when the twisted canonical bundle is big and nef. By theorem \ref{mainthm1}
we know that this conical \ka-Ricci flow exists forever. In this case we find that $\dr_{t}=\dr_{0}-t(Ric(\Omega)-(1-\beta)R(||\cdot||))$
will blow up at infinite time. To capture the limit behavior more precisely, we can consider normalized conical \ka-Ricci flow
\begin{equation}\label{eq:ckrf**}
 \dt\dr=-\dr-Ric(\dr)+2\pi(1-\beta)[D].
\end{equation}
Note that if we define $\tilde{\dr}(t):=(1+t)\dr(\log(1+t)),$ then we can compute that
\begin{align*}
 \dt\tilde{\dr}(t)=&\dr(\log(1+t))+(-Ric(\dr(\log(1+t))-\dr(\log(1+t))+2\pi(1-\beta)[D]))\\
=&-Ric(\tilde{\dr}(t))+2\pi(1-\beta)[D],
\end{align*}
which means that the solution of unnormalized conical \ka-Ricci flow is actually the same with normalized conical \ka-Ricci flow
only moduli a scaling. As we did in the proof of theorem \ref{mainthm1}, we can consider this equation in the level of cohomology class.
Then we can write
$$\dr_{t}=e^{-t}\dr_{0}+(1-e^{-t})(-Ric(\Omega)+(1-\beta)R(||\cdot||)),$$ and set $\overline{\dr_{t}}=\dr_{t}+k\ddb||S||^{2\beta}$ and
$\dr=\overline{\dr_{t}}+\ddb\varphi.$ Now we can write the equation \eqref{eq:ckrf**} in scalar form:
\begin{equation}\label{eq:ma-nef}
  \left\{ \begin{array}{rcl}
&\dt\varphi=\log\frac{(\overline{\dr_{t}}+\ddb\varphi)^{n}}{\Omega}+\log||S||^{2(1-\beta)}-\varphi-k||S||^{2\beta}
\\&\varphi(\cdot,0)=0
        \end{array}\right.
\end{equation}
    As before, we can consider an approximation flow equation of \eqref{eq:ma-nef}
\begin{equation}\label{eq:app-ma-nef}
 \left\{ \begin{array}{rcl}
&\dt\varphi_{\epsilon}=\log\frac{(\dr_{t,\epsilon}+\ddb\varphi_{\epsilon})^{n}(||S||^{2}+\epsilon^{2})^{1-\beta}}{\Omega}
-\varphi_{\epsilon}-k\chi(\epsilon^{2}+||S||^{2})
\\&\varphi_{\epsilon}(\cdot,0)=0
        \end{array}\right.
\end{equation}
where $$\dr_{t,\epsilon}:=e^{-t}\dr_{0}+(1-e^{-t})(-Ric(\Omega)+(1-\beta)R(||\cdot||))+k\ddb\chi(\epsilon^{2}+||S||^{2})$$ as before.\\

      First consider $C^{0}$-estimate for $\varphi_{\epsilon}.$ By maximal principle, as $$\log\frac{\dr_{t,\epsilon}^{n}(||S||^{2}+
\epsilon^{2})^{1-\beta}}{\Omega}-k\chi(\epsilon^{2}+||S||^{2})$$ is bounded from above, it's easy to see that $\varphi_{\epsilon}\leq C.$
 On the other hand, we recall lemma \ref{kodaira}as the twisted canonical bundle $K_{M}+(1-\beta)[D]$ is big
and nef, there exists an effective divisor E such that $\dr_{t,\epsilon}+\delta\ddb\log||S'||'^{2}>0$ for all $t\in[0,\infty)$ and any
$\delta\in(0,a),$ where $S'$ is a local defining section of E. Then the equation \eqref{eq:app-ma-nef} can be written as
\begin{align*}
\dt(\varphi_{\epsilon}&-\delta\ddb\log||S'||'^{2})=\log\frac{(\dr'_{t,\epsilon}+\ddb(\varphi_{\epsilon}-\delta\ddb\log||S'||'^
{2}))^{n}}{\Omega||S'||'^{2\delta}}\\&+\log(||S||^{2}+\epsilon^{2})^{1-\beta}-(\varphi_{\epsilon}-\delta\ddb\log||S'||'^{2})
-k\chi(\epsilon^{2}+||S||^{2}),
\end{align*}
where $\dr'_{t,\epsilon}=\dr_{t,\epsilon}+\delta\ddb\log||S'||'^{2}.$ By maximal principle again, as
$$\log\frac{\dr'^{n}_{t,\epsilon}(||S||^{2}+\epsilon^{2})^{1-\beta}}{\Omega||S'||'^{2\delta}}-k\chi(\epsilon^{2}+||S||^{2})$$ is uniformly
bounded from below, we have that $\varphi_{\epsilon}\geq-C_{\delta}+\delta\ddb\log||S'||'^{2}.$
Now let's consider the estimate of $\dot{\varphi}_{\epsilon}.$  Denote $\rho=-Ric(\Omega)+(1-\beta)R(||\cdot||)$ again and recall in
\cite{TZ}, we have that
\begin{align*}
  &(\dt-\Delta)(e^{t}\dot{\varphi}_{\epsilon}-\dot{\varphi}_{\epsilon}-\varphi_{\epsilon}-nt)\\=&-tr_{\dr}(\dr_{0}-\rho)+e^{-t}tr_{\dr}
(\dr_{0}-\rho)+\dot{\varphi}_{\epsilon}-\dot{\varphi}_{\epsilon}+n-tr_{\dr}(e^{-t}\dr_{0}+(1-e^{-t})\rho\\&+k\ddb\chi)-n=
-tr_{\dr}(\dr_{0}+k\ddb\chi)=-tr_{\dr}\dr_{0,\epsilon}<0,
\end{align*}
then by maximal principle, for $t>t_{0}>0,$ we have $\dot{\varphi}_{\epsilon}\leq Cte^{-t}.$ On the other hand, consider
$$H=\varphi_{\epsilon}+(1-e^{t-T})\dot{\varphi}_{\epsilon}-\delta\ddb\log||S'||'^{2},$$ for $t<T$ and compute that
$$(\dt-\Delta)H=-n+tr_{\dr}(\rho+e^{-T}(\dr_{0}-\rho)+k\ddb\chi)=-n+tr_{\dr}\dr'_{T,\epsilon}.$$ We know that for $t\infty,$ H tends to
positive infinity near E, so the minimal point of H is attained away from E, where $-n+tr_{\dr}\dr'_{T,\epsilon}\leq 0,$ by maximal
principle. Then we have $\dr\geq\frac{\dr'_{T,\epsilon}}{n}.$ At the minimal point of H, we have that
\begin{align*}
 H&=\varphi_{\epsilon}+(1-e^{t-T})(\log\frac{\dr^{n}(||S||^{2}+\epsilon^{2})^{1-\beta}}{\Omega}-\varphi_{\epsilon}-k\chi)-\delta\ddb
\log||S'||'^{2}\\&\geq e^{t-T}\varphi_{\epsilon}+(1-e^{t-T})(\log\frac{\dr'^{n}_{T,\epsilon}(||S||^{2}+\epsilon^{2})^{1-\beta}}{n^{n}
\Omega}-k\chi)-\delta\ddb\log||S'||'^{2}\\&\geq-C_{\delta},
\end{align*}
then we have that $$(1-e^{t-T})\dot{\varphi}_{\epsilon}\geq-\varphi_{\epsilon}+\delta\ddb\log||S'||'^{2}-C_{\delta}\geq-C_{\delta}.$$
Now let T tend to infinity, we have that $\dot{\varphi}_{\epsilon}\geq-C_{\delta}.$
  When we begin to study Laplacian estimate for $\varphi_{\epsilon},$ we note that the evolution equation for $\log\,tr_{\dr_{0,\epsilon}}
\dr$ is different from before only by a constant. So do the same computation we can derive that $$tr_{\dr_{0,\epsilon}}\dr\leq\frac{C}
{||S'||^{2\alpha}}$$ again. By the argument in \cite{LZ} again we can derive local high order estimate for $\varphi_{\epsilon}$ outside
the base locus E and the divisor D that $$||\varphi_{\epsilon}||_{C^{k}([0,\infty)\times K)}\leq C(k,K)$$ for any $K\subset\subset M
\setminus(D\bigcup E).$\\

        Similar to last two sections, we can choose a sequence of compact sets to approximate $M\setminus(D\bigcup E),$ and a sequence
$\varphi_{\epsilon_{i}},$ by diagonal method, such that $\varphi_{\epsilon_{i}}$ converges to a function $\varphi(t)$ on $[0,\infty)\times
M\setminus(D\bigcup E)$ in $C_{loc}^{\infty}$ sense, which is smooth on $M\setminus(D\bigcup E).$ By the same argument as the last section
we know that this convergence is also in the sense of currents globally. Similarly, as time tends to infinity, $\dr_{\varphi}=\overline
{\dr_{t}}+\ddb\varphi(t)$ is smooth on $M\setminus(D\bigcup E)$ where E is the intersection of all base divisors, i.e. stable base
locus of the twisted canonical bundle, and conic along $D\setminus E$ with cone angle $2\pi\beta.$ As the flow exists forever, we hope to
analyse the limit behavior of $\varphi(t)$ and $\dr_{\varphi}.$ As $\dot{\varphi}_{\epsilon}\leq Cte^{-t}k$ we can see that near the
infinite time, $\varphi_{\epsilon}$ is nonincreasing. Note that we also have $\varphi_{\epsilon}\geq-C_{\delta}+\delta\ddb\log||S'||'^{2},$
which indicates that $\varphi_{\epsilon}$ converges uniformly on arbitrary compact set $K\subset\subset M\setminus(D\bigcup E).$ And we
just see that $\varphi_{\epsilon}$ is nonincreasing near the infinite time, so we can conclude that $\dot{\varphi}_{\epsilon}$ converges
to 0 in $ M\setminus(D\bigcup E).$ And we find that convergence properties here are independent of $\epsilon,$ we can see that $\varphi(t)$
converges uniformly on each compact set $K\subset\subset M\setminus(D\bigcup E),$ and $\dot{\varphi}$ converges to 0 in $ M\setminus
(D\bigcup E).$ Then recall the flow equation \eqref{eq:ma-nef} that the right hand side of it tends to 0. Now set $\overline{\dr}_{\infty}
=\overline{\dr}(\infty)=-Ric(\Omega)+(1-\beta)R(||\cdot||)+k\ddb||S||^{2\beta},$ and $\varphi_{\infty}$ as the limit of $\varphi(t),$ write
 $\dr_{\infty}=\overline{\dr}_{\infty}+\ddb\varphi_{\infty}.$ By \eqref{eq:ma-nef}, we conclude that
\begin{align*}
 Ric(\dr_{\infty})&=Ric(\Omega)-\ddb\varphi_{\infty}-k\ddb||S||^{2\beta}+\log||S||^{2(1-\beta)}\\&=-\dr_{\infty}+R(||\cdot||)+\ddb\log
||S||^{2(1-\beta)}=-\dr_{\infty}+2\pi(1-\beta)[D],
\end{align*}
which means that outside the stable base locus, the current $\dr_{\varphi}$ tends to a conical \ka-Einstein metric, with cone angle $2\pi
\beta$ along $D\setminus E$ as t tends to infinity. Moreover, $C^{2,\alpha}$-estimate still holds.
    Finally, as \cite{TZ}, we can prove that the limit current is independent of the choice of the initial conic metric. First, by the same
argument as in the proof of theorem \ref{mainthm1}, we can see that the solution is independent of the choice of volume form $\Omega.$ Now
fix $\Omega,$ and see whether different initial conic \ka\ metrics matter. Recall that for any $\delta\in(0,a),$ we have the estimates
$$-C_{\delta}+\delta\ddb\log||S'||'^{2}\leq\varphi_{\epsilon}\leq C,\qquad-C_{\delta}+\delta\ddb\log||S'||'^{2}\leq\dot{\varphi}_{\epsilon}
\leq C,$$ which indicates that $\varphi_{\epsilon}+\dot{\varphi}_{\epsilon}\leq C.$ And we also know that $\dot{\varphi}_{\epsilon}$ tends
to 0 as t tends to infinity. Then we obtain that
\begin{align*}
 \int_{M}\frac{e^{\varphi_{\epsilon,\infty}+k\chi}}{(||S||^{2}+\epsilon^{2})^{1-\beta}}\Omega&=\lim_{t\to\infty}\int_{M}
\frac{e^{\varphi_{\epsilon}+\dot{\varphi}_{\epsilon}+k\chi}}{(||S||^{2}+\epsilon^{2})^{1-\beta}}\Omega\\&=\lim_{t\to\infty}\int_{M}(\dr
_{t}+\ddb(\varphi_{\epsilon}+k\chi))^{n}=\lim_{t\to\infty}\int_{M}\dr_{t}^{n}.
\end{align*}
     Now suppose we have two initial conic metrics $\dr^{*}=\dr_{0}+k\ddb||S||^{2\beta}$ and $\dr'^{*}=\dr'_{0}+k'\ddb||S||^{2\beta}.$
Without loss of generality, we can suppose that $\dr_{0}<\dr'_{0}$ and $k<k',$ otherwise we can replace $\dr'^{*}$ by $\dr^{*}+\dr'^{*},$
then we can write the following flow equations:
\begin{align*}
 &\dt(\varphi_{\epsilon}+k\chi)=\log\frac{(\dr_{t}+\ddb(\varphi_{\epsilon}+k\chi))^{n}(||S||^{2}+\epsilon^{2})^{1-\beta}}{\Omega}
-(\varphi_{\epsilon}+k\chi)\\&\dt(\varphi'_{\epsilon}+k'\chi)=\log\frac{(\dr'_{t}+\ddb(\varphi'_{\epsilon}+k'\chi))^{n}(||S||^{2}+
\epsilon^{2})^{1-\beta}}{\Omega}-(\varphi'_{\epsilon}+k'\chi),\\&\varphi_{\epsilon}(\cdot,0)=\varphi'_{\epsilon}(\cdot,0)=0,
\end{align*}
where $\dr'_{t}=e^{-t}\dr'_{0}+(1-e^{-t})(-Ric(\Omega)+(1-\beta)R(||\cdot||))=\dr_{t}+e^{-t}(\dr'_{0}-\dr_{0}).$ Take the difference, and
denote $\varphi_{\epsilon,k}=\varphi_{\epsilon}+k\chi$ and $\varphi'_{\epsilon,k'}=\varphi'_{\epsilon}+k'\chi,$ we have that
\begin{align*}
\dt(\varphi_{\epsilon,k}-\varphi'_{\epsilon,k'})=&\log\frac{(\dr_{t}+\ddb\varphi_{\epsilon,k})^{n}}{(\dr_{t}+\ddb\varphi_{\epsilon,k}
+e^{-t}(\dr'_{0}-\dr_{0})-\ddb(\varphi_{\epsilon,k}-\varphi'_{\epsilon,k'}))^{n}}\\&-(\varphi_{\epsilon,k}-\varphi'_{\epsilon,k'}),
\end{align*}
and $(\varphi_{\epsilon,k}-\varphi'_{\epsilon,k'})(\cdot,0)=(k-k')\chi\leq 0.$ By maximal principle, we have $\varphi_{\epsilon,k}-
\varphi'_{\epsilon,k'}\leq 0$ at any time, so at infinity, we have that $\varphi_{\epsilon,\infty}+k\chi\leq\varphi'_{\epsilon,\infty}
+k'\chi.$ However, as $\int_{M}\frac{e^{\varphi_{\epsilon,\infty}+k\chi}}{(||S||^{2}+\epsilon^{2})^{1-\beta}}\Omega=\lim_{t\to\infty}
\int_{M}\dr_{t}^{n}=\int_{M}\frac{e^{\varphi'_{\epsilon,\infty}+k'\chi}}{(||S||^{2}+\epsilon^{2})^{1-\beta}}\Omega,$ we can obtain that
$\varphi_{\epsilon,\infty}+k\chi=\varphi'_{\epsilon,\infty}+k'\chi.$ Take the limit as $\epsilon$ tends to 0, we complete the proof for
the uniqueness of the limit conical \ka-Einstein current, which also completes the proof for theorem \ref{mainthm3}.

\section{Further remarks}

       Until now we proved three main theorems under the assumption of one irreducible divisor. Now we discuss how to generalize 
our proofs to the case of simple normal crossing divisors briefly. By \cite{GP}, we can add up all the approximation metrics to all 
divisors, and all the properties are preserved. In Laplacian estimate, we note that in \cite{GP}, they proved that the approximation 
metric has a lower bound for bisectional curvature, which allows us to obtain the Laplacian estimate. The proof of $C^{2,\alpha}$-estimate
is essentially the same, see \cite{Sh}.

        Note that in \cite{ST3}, unnormalized \ka-Ricci flow can smooth initial \ka\ metrics with rough and degenerate datas. So naturally, 
we hope unnormalized \ka-Ricci flow can play similar roles in the study of conic \ka\ metrics and we will discuss this phenomenon in the 
future.

\end{document}